\newcommand{\A}{\ve A}
\newcommand{\AEps}{\A_\eps}
\newcommand{\C}{{\mathcal{C}}}
\newcommand{\D}[1]{\tenD #1}
\DeclareMathOperator{\Djac}{D}
\newcommand{\dd}{\ve d}
\newcommand{\Deps}[1]{\tenD_\eps #1}
\newcommand{\Deriv}[2]{\frac{\mathrm{d}#1}{\mathrm{d}#2}}
\DeclareMathOperator{\dist}{dist}
\renewcommand{\div}{\mathrm{div\,}}
\newcommand{\dt}{\partial_t}
\newcommand{\dual}[2]{\langle #1,#2\rangle}
\newcommand{\eps}{\varepsilon}
\newcommand{\FF}{\ve F}
\newcommand{\ff}{\ve f}
\newcommand{\gdet}{\mathfrak g}
\newcommand{\hh}{\ve h}
\newcommand{\I}{\ten I}
\renewcommand{\L}{L}
\newcommand{\LL}{{\mathbf L}}
\newcommand{\M}{\ten M}
\newcommand{\N}{\ten N}
\newcommand{\nn}{\ve n}
\newcommand{\norm}[1]{\|#1\|}
\renewcommand{\O}{\N'}
\newcommand{\OmegaE}{{\Omega_\eps}}
\newcommand{\oOmega}{\overline\Omega}
\newcommand{\pEpsB}{{\bar p}_\eps}
\newcommand{\pOmega}{{\partial\Omega}}
\newcommand{\pOmegaE}{{\pOmega_\eps}}
\newcommand{\pphi}{\boldsymbol\phi}
\newcommand{\problem}[1]{(P(#1))}
\newcommand{\R}{{\mathbb{R}}}
\newcommand{\scal}[2]{(#1,#2)}
\DeclareMathOperator{\supp}{supp}
\newcommand{\ten}[1]{\mathbb #1}
\newcommand{\tenC}{\ten C}
\newcommand{\tenD}{\ten D}
\newcommand{\tenS}{\ten S}
\DeclareMathOperator{\tr}{tr}
\newcommand{\uEps}{\ve{u}_\eps}
\newcommand{\uu}{\ve{u}}
\newcommand{\vDot}{\tilde{\vv}}
\newcommand{\ve}[1]{{\mathbf{#1}}}
\newcommand{\vEps}{\vv_\eps}
\newcommand{\vEpsB}{{\bar\vv}_\eps}
\newcommand{\vv}{\ve v}
\newcommand{\W}{W}
\newcommand{\weakly}{\rightharpoonup}
\newcommand{\wOmega}{B}
\newcommand{\WW}{\mathbf W}
\newcommand{\ww}{\ve w}
\newcommand{\WWbc}{\WW_0}
\newcommand{\WWbcdiv}{\WW_{0,\div}}
\newcommand{\xx}{\ve x}
\newcommand{\xxi}{\boldsymbol\xi}
\newcommand{\yy}{\ve y}
\newtheorem{theorem}{Theorem}
\newtheorem{lemma}[theorem]{Lemma}
\newtheorem{assumption}{Assumption}
\newtheorem{corollary}[theorem]{Corollary}
\newtheorem*{remark}{Remark}
\begin{document}
\begin{frontmatter}

\title{Shape sensitivity analysis of time-dependent flows of~incompressible non-Newtonian fluids}

\author[iecn,pan]{J. Soko\l owski}
\ead{Jan.Sokolowski@iecn.u-nancy.fr}

\author[imas]{J. Stebel}
\ead{stebel@math.cas.cz}

\address[iecn]{Institut \'Elie Cartan, UMR 7502 Nancy-Universit\'e-CNRS-INRIA\@, Laboratoire de Ma\-th\'e\-ma\-tiques, Universit\'e Henri Poincar\'e Nancy 1, B.P.~239, 54506 Vandoeuvre L\`es Nancy Cedex, France}

\address[pan]{Systems Research Institute of the Polish Academy of Sciences, ul. Newelska 6, 01-447 Warszawa, Poland}

\address[imas]{Institute of Mathematics of the Academy of Sciences of the Czech Republic, \v Zitn\'a 25, 115~67 Praha 1, Czech Republic}

\begin{abstract}
We study the shape differentiability of a cost function for the flow of an incompressible viscous fluid of power-law type.
The fluid is confined to a bounded planar domain surrounding an obstacle.
For smooth perturbations of the shape of the obstacle we express the shape gradient of the cost function which can be subsequently used to improve the initial design.
\end{abstract}

\begin{keyword}
shape optimization, shape gradient, incompressible viscous fluid, Navier-Stokes equations


\end{keyword}
\end{frontmatter}

\section{Introduction}

Shape sensitivity analysis is a fundamental step towards the numerical solution of the shape optimization problems governed by partial differential equations. In the context of Navier-Stokes equations, the sensitivity analysis of shape functionals is performed in \cite{MR2193461, MR2737420} for the incompressible case, and in \cite{MR2683904} for the compressible case. Usually, in numerical solution of applied problems this step is made formally, see e.g., \cite{MR2791367,MR2556357} for the related results. 

The present paper is focused on the rigorous analysis and new results for a nonlinear nonstationary model in two spatial dimensions are proved. We refer the reader to \cite{MR2737420} for some results in the case of a stationary model.

We consider the time-dependent flow of an incompressible fluid in a bounded domain $\Omega:=\wOmega\setminus S$ in $\R^2$, where $\wOmega$ is a container and $S$ is an obstacle.
Motion of the fluid is described by the system of equations
\begin{align}
\notag
\dt{\vv} + \div(\vv\otimes\vv) - \div\tenS(\D{\vv}) + \nabla p + \tenC\vv & = \ff &&\mbox{ in }Q,\\
\notag
\div\vv & = 0 &&\mbox{ in }Q,\\
\label{eq:gns}\tag{$P(\Omega)$}
\vv &= 0 &&\mbox{ on }\Sigma,\\
\notag
\vv(0,\cdot) &= \vv_0 && \mbox{ in }\Omega.
\end{align}
Here $Q:=(0,T)\times\Omega$, $\Sigma:=(0,T)\times\pOmega$, where $(0,T)$ is a time interval of arbitrary length, $\vv$, $p$, $\tenC$, $\ff$ stands for the velocity, the pressure, the constant skew-symmetric Coriolis tensor and the body force, respectively.
The traceless part $\tenS$ of the Cauchy stress can depend on the symmetric part $\D\vv$ of the velocity gradient in the following way:
\begin{equation}
\label{eq:form_of_S}
\tenS(\D\vv) = \nu(|\D\vv|^2)\D\vv,
\end{equation}
where $\nu$, $|\D\vv|^2$ is the viscosity and the shear rate, respectively.
In particular, we assume that $\nu$ has a polynomial growth (see Section \ref{sec:struct-asm} below), which includes e.g. the Carreau and the power-law model.

In the model the term of Coriolis type is present. This term appears e.g. when the change of variables is performed in order to take into account the flight scenario of the obstacle in the fluid or gas.

The aim of this paper is to investigate differentiability of a shape functional depending on the solution to \eqref{eq:gns} with respect to the variations of the shape of the obstacle.
We consider a model problem with the drag functional
\begin{equation}
\label{eq:def_J}
J(\Omega):=\int_0^T\int_{\partial S}(\tenS(\D\vv)-p\I)\nn\cdot\dd,
\end{equation}
with a given constant unit vector $\dd$.
Instead of $J$ one could take other type of functional, since our method does not rely on its specific form.

Fluids whose viscosity depends on the shear rate through \eqref{eq:form_of_S} form an important class of non-Newtonian fluids (see e.g \cite{schowalter, truesdell-noll, rajagopal_mnnf} for general references).
Particular models find applications in many areas of chemistry, biology, medicine or engineering.
The first mathematical results were established already 40 years ago in \cite{ladyzhenskaya-new-equations, ladyzhenskaya-book, jl-lions-book}, for recent references see e.g. \cite{frehse_malek_steinhauer,frehse_malek_steinhauer_analysis,malek-rajagopal,diening2010existence}.
In context of optimal control the fluids with shear dependent viscosity were studied in \cite{slawig,wachsmuth-roubicek}.
Numerical results for a shape optimization problem can be found in \cite{abraham}, see also \cite{bhms,hs_papermachine}.

Our main interest is the rigorous analysis of the shape differentiability for \eqref{eq:gns} and \eqref{eq:def_J}.
We follow the general framework developed by \citet{sokolowski_zolesio} using the speed method and the notion of the material derivative.
Let us point out that due to \eqref{eq:form_of_S} the state problem is nonlinear in its nature.
In such cases the shape sensitivity analysis is usually restricted to `small data', i.e. for sufficiently small body forces, initial conditions or short times.
For the presented results no such restriction is necessary, because of uniqueness and regularity of the state variables.

\subsection{Shape derivatives}

\label{sec:shape-derivatives-formal}

We start by the description of the framework for the shape sensitivity analysis.
For this reason, we introduce a vector field $\ve T\in\C^2(\R^2,\R^2)$ vanishing in the vicinity of $\partial B$ and define the mapping
$$ \yy(\xx) = \xx + \eps\ve T(\xx). $$
For small $\eps>0$ the mapping $\xx\mapsto\yy(\xx)$ takes diffeomorphically the region $\Omega$ onto
$\OmegaE=B\setminus S_\eps$ where $S_\eps=\yy(S)$.
Denoting $Q_\eps:=(0,T)\times\OmegaE$, $\Sigma_\eps:=(0,T)\times\pOmegaE$, we consider the counterpart of problem \eqref{eq:gns} in $Q_\eps$, with the data $\ff_{|Q_\eps}$ and $\vv_{0|\OmegaE}$. The new problem will be denoted by $(P(\OmegaE))$ and its solution by $(\vEpsB,\pEpsB)$.

For the nonlinear system \eqref{eq:gns} we introduce the shape derivatives of solutions.
To this end we need the linearized system of the form:

\textit{Find the couple $(\uu,\pi)$ such that
\begin{align}
\notag
\dt{\uu} + \div\left[\uu\otimes\vv+\vv\otimes\uu-\tenS'(\D{\vv})\D{\uu}\right]
+ \nabla\pi + \tenC\uu & = \FF &&\mbox{ in }Q,\\
\notag
\div\uu & = 0 &&\mbox{ in }Q,\\
\label{eq:gnslin}\tag{$P_{\rm lin}(\Omega)$}
\uu &= \hh &&\mbox{ on }\Sigma,\\
\notag
\uu(0,\cdot) &= \uu_0 && \mbox{ in }\Omega,
\end{align}
where $ \FF $, $ \hh$ and $ \uu_0$ are given elements.
}

The shape derivative $\vv'$ and the material derivative $\dot\vv$ of solutions are formally introduced by
$$ \vv':=\lim_{\eps\to 0}\frac{\vEpsB-\vv}\eps,\quad \dot\vv:=\lim_{\eps\to 0}\frac{\vEpsB\circ\yy-\vv}\eps. $$
The standard calculus for differentiating with respect to shape yields that $\vv'$ is the solution of \eqref{eq:gnslin} with the data $\FF =\ve 0$, $ \uu_0=\ve 0$, and $\hh=-\partial \vv/\partial\nn(\ve T\cdot\nn)$.
%
Using \eqref{eq:J_volume} as the definition of $J$ we obtain the expression for the shape gradient:
\begin{multline}
\label{eq:J_volume_gradient}
dJ(\Omega;\ve T) := \lim_{\eps\to 0}\frac{J(\OmegaE)-J(\Omega)}\eps\\
= \int_\Omega\vv'(T)\cdot\xxi + \int_Q\left[ \left(\tenC\vv'\right)\cdot\xxi + \left(\tenS^\prime(\D{\vv})\D{\vv'}-\vv'\otimes\vv-\vv\otimes\vv'\right):\nabla\xxi\right]\\
- \int_0^T\int_{\partial S}(\ff\cdot\dd)\ve T\cdot\nn.
\end{multline}
In the above formula, the part containing $\vv'$ depends implicitly on the direction $\ve T$.
This is not convenient for practical use, hence we introduce the adjoint problem for further simplification of \eqref{eq:J_volume_gradient}:

\textit{Find the couple $(\ww,s)$ such that
\begin{align}
\notag
-\dt{\ww} - 2(\D\ww)\vv - \div\left[\tenS'(\D{\vv})^\top\D{\ww}\right] + \nabla s - \tenC\ww & = \ve 0 &&\mbox{ in }Q,\\
\notag
\div\ww & = 0 &&\mbox{ in }Q,\\
\label{eq:gnsadj}\tag{$P_{\rm adj}(\Omega)$}
\ww &= \dd &&\mbox{ on }\Sigma,\\
\notag
\ww(T,\cdot) &= \ve 0 && \mbox{ in }\Omega.
\end{align}
}
Consequently, the expression for $dJ$ reduces to
\begin{equation}
\label{eq:J_surface_gradient}
dJ(\Omega;\ve T) = - \int_0^T\int_{\partial S}\left[\left(\tenS'(\D\vv)^\top\D\ww-s\I\right):\dfrac{\partial\vv}{\partial\nn}\otimes\nn + \ff\cdot\dd\right]\ve T\cdot\nn.
\end{equation}

In order to prove the result given by \eqref{eq:J_volume_gradient} and \eqref{eq:J_surface_gradient} we need the material derivatives.
In particular, it is sufficient to show that the linear mapping
$$
\mathbf{T}\mapsto dJ(\Omega;\mathbf{T})
$$
is continuous in an appropriate topology, see the structure Theorem in the book \cite{sokolowski_zolesio} for details.

The paper is organized as follows:
In Section \ref{sec:prelim} we impose the structural assumptions and collect the basic facts about the existence of weak solutions to \eqref{eq:gns}.
The main results of the paper are stated in Section \ref{sec:main}.
Well-posedness of the linearized systems is studied in Section \ref{sec:well-posedness-lin}.
The rest is devoted to the proof of the main results.
In Section \ref{sec:fixed_form} the problem is formulated in the fixed domain, Section \ref{sec:stability} deals with the shape stability, Section \ref{sec:existence_mat_der} with the Lipschitz estimates and the existence of the material derivative and Section \ref{sec:shape-der} with the existence of the shape gradient of $J$.

\section{Preliminaries}
\label{sec:prelim}

We impose the structural assumptions on the data, state the known results on well-posedness of \eqref{eq:gns} and introduce the elementary notation for shape sensitivity analysis.

\subsection{Structural assumptions}
\label{sec:struct-asm}

We require that $\tenS$ has a potential $\Phi:[0,\infty)\to[0,\infty)$, i.e. $\tenS_{ij}(\ten D)=\partial\Phi(|\ten D|^2)/\partial\D_{ij}$. Further we assume that $\Phi$ is a $\C^3$ function with $\Phi(0)=0$ and that there exist constants $C_1,C_2,C_3>0$ and $r\ge 2$ such that
\begin{subequations}
\label{asm:S}
\begin{align}
\label{asm:S1}
C_1(1+|\ten A|^{r-2})|\ten B|^2 \le\tenS'(\ten A)::(\ten B\otimes\ten B) \le C_2(1+|\ten A|^{r-2})|\ten B|^2,\\
\label{asm:S2}
|\tenS''(\ten A)| \le C_3(1+|\ten A|^{r-3})
\end{align}
\end{subequations}
for any $0\neq\ten A, \ten B\in\R^{2\times 2}_{sym}$.
The above inequalities imply the following properties of $\tenS$:
\begin{lemma}
Let $\tenS$ satisfy \eqref{asm:S}.
\begin{itemize}
\item[(i)] There is a constant $C_4>0$ such that for every $\ten A\in\R^{2\times 2}_{sym}$:
\begin{equation}
\label{eq:bound_S}
|\tenS(\ten A)|\le C_4(1+|\ten A|^{r-1}).
\end{equation}
\item[(ii)] There is a constant $C_5>0$ such that
\begin{equation}
\label{eq:strong_monotony_S}
(\tenS(\ten A)-\tenS(\ten B)):(\ten A-\ten B)\ge C_5\left|\ten A-\ten B\right|^r
\end{equation}
for all $\ten A$, $\ten B\in\R^{2\times 2}_{sym}$.
\end{itemize}
\end{lemma}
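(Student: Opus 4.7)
The plan is to prove both (i) and (ii) by representing $\tenS(\ten A)-\tenS(\ten B)$ as a line integral of $\tenS'$ along the segment from $\ten B$ to $\ten A$ (with $\ten B = \ten 0$ for (i)), and then invoking the pointwise bounds \eqref{asm:S1}. I expect the only non-routine point to be an elementary geometric estimate showing that the integral of $|g(t)|^{r-2}$ over this segment controls $|\ten A-\ten B|^{r-2}$ from below.

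For (i), first note $\tenS(\ten 0)=\ten 0$, since $\tenS_{ij}(\ten D)=2\Phi'(|\ten D|^2)\ten D_{ij}$ and $\Phi\in\mathcal{C}^3$ with $\Phi(0)=0$. Hence $\tenS(\ten A)=\int_0^1\tenS'(t\ten A):\ten A\,dt$. Because $\tenS$ derives from the potential $\Phi$, the $4$-tensor $\tenS'(\cdot)$ is symmetric as a bilinear form on $\R^{2\times 2}_{sym}$ and nonnegative by \eqref{asm:S1}. A Cauchy--Schwarz argument for such forms therefore gives $|\tenS'(t\ten A):\ten A|\le C_2(1+|t\ten A|^{r-2})|\ten A|$; integrating in $t$ and distinguishing $|\ten A|\le 1$ from $|\ten A|\ge 1$ then yields \eqref{eq:bound_S}.

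For (ii), set $g(t):=(1-t)\ten B+t\ten A$. Differentiation in $t$ combined with \eqref{asm:S1} produces
\begin{align*}
(\tenS(\ten A)-\tenS(\ten B)):(\ten A-\ten B)
&= \int_0^1\tenS'(g(t))::\bigl((\ten A-\ten B)\otimes(\ten A-\ten B)\bigr)\,dt\\
&\ge C_1|\ten A-\ten B|^2\int_0^1\bigl(1+|g(t)|^{r-2}\bigr)\,dt.
\end{align*}
The geometric estimate $\int_0^1|g(t)|^{r-2}\,dt\ge c_r|\ten A-\ten B|^{r-2}$ is obtained as follows: assuming without loss of generality $|\ten A|\ge|\ten B|$, the triangle inequality gives $|\ten A|\ge\tfrac12|\ten A-\ten B|$, and then for $t\in[3/4,1]$ the reverse triangle inequality yields $|g(t)|\ge|\ten A|-(1-t)|\ten A-\ten B|\ge\tfrac14|\ten A-\ten B|$; integrating over $[3/4,1]$ produces the claim. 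Finally, \eqref{eq:strong_monotony_S} follows by splitting according to the size of $|\ten A-\ten B|$: in the regime $|\ten A-\ten B|\le 1$ the term $C_1|\ten A-\ten B|^2$ already dominates $|\ten A-\ten B|^r$ because $r\ge 2$, while for $|\ten A-\ten B|\ge 1$ the geometric bound delivers the required $|\ten A-\ten B|^r$ growth. The only genuinely delicate step is the geometric lower bound; once it is in hand, the rest is routine.
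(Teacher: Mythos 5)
Your proof is correct; the paper itself gives no proof of this lemma (it only cites M\'alek--Rajagopal), and your argument --- the line-integral representation of $\tenS(\ten A)-\tenS(\ten B)$ via $\tenS'$, Cauchy--Schwarz for the symmetric nonnegative bilinear form $\tenS'$ in (i), and the lower bound $\int_0^1|g(t)|^{r-2}\,dt\ge c_r|\ten A-\ten B|^{r-2}$ in (ii) --- is precisely the standard one found there. Two cosmetic remarks: the exclusion of $\ten A=\ten 0$ in \eqref{asm:S1} is harmless since the segment $g(t)$ meets the origin in at most one point (a null set for the $t$-integral), and your final case split on the size of $|\ten A-\ten B|$ is superfluous, because $1+c_r|\ten A-\ten B|^{r-2}\ge c_r|\ten A-\ten B|^{r-2}$ already gives \eqref{eq:strong_monotony_S} in all regimes.
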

The proof can be found e.g. in \cite{malek-rajagopal}.

\subsection{Weak formulation}
\label{sec:weak_form}

For the definition of the weak solution we will use the space
$$ \WWbcdiv^{1,r}(\Omega):=\{\pphi\in\WW^{1,r}_0(\Omega);~\div\pphi=0\}. $$
Let $\vv_0\in\WWbcdiv^{1,r}(\Omega)$ and $\ff\in\L^2(0,T;(\WWbcdiv^{1,2}(\Omega))^*)$.
Then a function $\vv\in\L^r(0,T;\WWbcdiv^{1,r}(\Omega))$ with $\dt\vv\in\L^2(0,T;(\WWbcdiv^{1,2}(\Omega))^*)$ is said to be a weak solution to the problem $\problem{\Omega}$ if $\vv(0)=\vv_0$ and
\begin{multline}
\label{eq:weak_form}
\int_0^T\dual{\dt{\vv}}{\pphi}_{\WWbcdiv^{1,2}(\Omega)}
+\int_Q\Big[\tenS(\D{\vv}):\D{\pphi}
-\vv\otimes\vv:\nabla\pphi
+\tenC\vv\cdot\pphi \Big]\\
=\int_Q\ff\cdot\pphi
\end{multline}
for every $\pphi\in\L^r(0,T;\WWbcdiv^{1,r}(\Omega))$.
Note that the pressure is eliminated since test functions are divergence free.

The existence and uniqueness of a weak solution to $\problem{\Omega}$ with $\tenC\equiv 0$ in two space dimensions is due to \citet{ladyzhenskaya-new-equations,ladyzhenskaya-book} and \citet{jl-lions-book}.
Regularity of weak solutions was studied e.g. by \citet{malek-necas-ruzicka, daveiga-kaplicky-ruzicka, kaplicky-regularity}.
We recall the following result:
\begin{theorem}[\citet{kaplicky-regularity}]
\label{th:regularity}
Let $\Omega\subset\R^2$ be a bounded domain with $\C^{2+\mu}$ boundary and $T>0$.
Let \eqref{asm:S} hold for some $r\in[2,4)$, $\tenC\equiv 0$ and
\begin{equation}
\label{eq:asm_f_for_regularity}
\ff\in\L^\infty(0,T;\LL^2(\Omega)), \quad \ff(0)\in\LL^2(\Omega), \quad \dt\ff\in\L^2(0,T;(\WWbcdiv^{1,2}(\Omega))^*),
\end{equation}
\begin{equation}
\label{eq:asm_v0_for_regularity}
\vv_0\in\WW^{\rho,2}(\Omega)\cap\WWbcdiv^{1,2}(\Omega), \quad \rho>2 ~(\rho=2 \mbox{ if }r=2).
\end{equation}
Then the unique weak solution $\vv$, $p$ of $\problem{\Omega}$ satisfies for $s\in(1,2)$ ($s=2$ if $r=2$):
$$ \vv\in\L^\infty(0,T;\WW^{2,s}(\Omega)), \quad \dt\vv\in\L^\infty(0,T;\LL^2(\Omega))\cap\L^2(0,T;\WWbcdiv^{1,2}(\Omega)). $$
Moreover, if there is a $\tilde q>2$ such that
\begin{equation}
\label{eq:asm_f_for_bounded_grad}
\ff\in\L^\infty(0,T;\LL^{\tilde q}(\Omega)), \quad \dt\ff\in\L^{\tilde q}(0,T;\WW^{-1,\tilde q}(\Omega)),
\end{equation}
then there exists $q>2$ and $\alpha>0$ such that for all $\epsilon\in(0,T)$ it holds:
$$ \nabla\vv,p\in\L^\infty(\epsilon,T;\W^{1,q}(\Omega))\cap\C^{0,\alpha}([\epsilon,T]\times\overline\Omega). $$
\end{theorem}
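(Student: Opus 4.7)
The plan is to follow the Galerkin-plus-a-priori-estimates philosophy, obtaining successively higher levels of regularity by differentiating the equation in time and then invoking stationary elliptic theory for the generalized Stokes system at each time slice. Since uniqueness of weak solutions in 2D is already known from the Ladyzhenskaya--Lions theory, it is enough to establish the estimates on a smooth approximation and pass to the limit.

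First I would run the standard energy estimate obtained by testing \eqref{eq:weak_form} with $\pphi = \vv$, which yields $\vv \in \L^\infty(0,T;\LL^2(\Omega)) \cap \L^r(0,T;\WWbcdiv^{1,r}(\Omega))$ using \eqref{asm:S1}. The next (and crucial) a priori estimate is obtained by formally differentiating the equation in time and testing with $\dt\vv$. Thanks to \eqref{asm:S1}, the leading term gives control of $\int_Q(1+|\D\vv|^{r-2})|\D\dt\vv|^2$, while the convective term is absorbed in two space dimensions via the Ladyzhenskaya inequality $\|\uu\|_{\LL^4}^2 \lesssim \|\uu\|_{\LL^2}\|\nabla\uu\|_{\LL^2}$ plus interpolation; the initial value $\dt\vv(0)$ is computed from the equation evaluated at $t=0$ and controlled in $\LL^2$ using \eqref{eq:asm_v0_for_regularity} and $\ff(0)\in\LL^2$. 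This produces $\dt\vv \in \L^\infty(0,T;\LL^2(\Omega)) \cap \L^2(0,T;\WWbcdiv^{1,2}(\Omega))$.

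With $\dt\vv \in \LL^2$ in hand, I would freeze time and read the momentum equation as a stationary generalized Stokes problem
\begin{equation*}
-\div\tenS(\D\vv) + \nabla p = \ff - \dt\vv - \div(\vv\otimes\vv) =: \tilde\ff(t,\cdot) \in \LL^2(\Omega),
\end{equation*}
coupled with $\div\vv = 0$ and Dirichlet data on a $\C^{2+\mu}$ boundary. Applying the stationary $\LL^s$-regularity theory for power-law Stokes systems (based on the difference-quotient/Nikolskii method combined with the uniform ellipticity estimate from \eqref{asm:S1} near points where $|\D\vv|$ is large, and on linear Stokes Calder\'on--Zygmund bounds otherwise) gives $\vv \in \W^{2,s}(\Omega)$ for some $s \in (1,2)$, uniformly in $t$. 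This is the step I expect to be the main obstacle: obtaining $\W^{2,s}$ regularity for the nonlinear Stokes system in a form that is uniform in $t$ and compatible with the degeneracy that appears when $r>2$. In the Newtonian case $r=2$ the argument reduces to the classical linear Cattabriga/Solonnikov estimates and one obtains $s=2$.

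For the second part, under the improved data assumption \eqref{eq:asm_f_for_bounded_grad} I would bootstrap: first improve the integrability of $\dt\vv$ to $\L^\infty(L^{\tilde q})$ by testing the time-differentiated equation with $|\dt\vv|^{\tilde q - 2}\dt\vv$ (losing a boundary layer near $t=0$, which is why the conclusion is stated on $[\epsilon,T]$), and then reinterpret the stationary problem with right-hand side in $\LL^{\tilde q}$. A Gehring-type self-improving-integrability lemma applied to the reverse H\"older inequality for the stationary problem yields some exponent $q>2$ for which $\nabla\vv,p \in \W^{1,q}(\Omega)$ uniformly on $[\epsilon,T]$. The H\"older bound $\C^{0,\alpha}([\epsilon,T]\times\overline\Omega)$ then follows from the Sobolev embedding $\W^{1,q}(\Omega)\hookrightarrow \C^{0,1-2/q}(\overline\Omega)$ in two dimensions together with the time regularity of $\dt\vv$ gained in the preceding step, interpolated through a parabolic Morrey/Campanato argument. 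The details of these stationary estimates are the content of \cite{kaplicky-regularity}, which we simply cite.
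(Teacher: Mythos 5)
This statement is not proved in the paper at all: it is quoted verbatim as an external result of \citet{kaplicky-regularity} and used as a black box, so there is no in-paper argument to compare your sketch against. Your outline does reproduce the standard architecture behind such regularity theorems (energy estimate, testing the time-differentiated equation with $\dt\vv$ using \eqref{eq:asm_v0_for_regularity} and $\ff(0)\in\LL^2(\Omega)$ to control $\dt\vv(0)$, then reading the momentum balance at frozen time as a stationary generalized Stokes system with $\LL^2$ right-hand side), and you correctly flag that the genuinely hard content --- the $\W^{2,s}$ and, above all, the $\C^{0,\alpha}$-up-to-the-boundary estimates for $\nabla\vv$ in the degenerate case $r>2$ --- is exactly what the cited reference supplies; deferring it there is consistent with how the paper itself treats the theorem. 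The one place your sketch is looser than the literature is the final step: the passage from improved integrability of $\dt\vv$ to $\nabla\vv\in\L^\infty(\epsilon,T;\W^{1,q}(\Omega))\cap\C^{0,\alpha}([\epsilon,T]\times\overline\Omega)$ is not a routine Gehring-plus-Morrey interpolation but the technical core of \cite{kaplicky-regularity}, so it should be presented as cited, not as derived.
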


We will also need the $\L^q$ theory for the generalized Stokes system \eqref{eq:gnslin}.
Here and in what follows we will assume that the non-homogeneous boundary condition is time-independent, i.e. $\hh=\hh(\xx)$.
Theorem 4.1 in \cite{bothe-pruss} implies (see also \cite{solonnikov_lp}):

\begin{theorem}[\citet{bothe-pruss}]
\label{th:Lq_gstokes}
Let $\Omega\in\C^{2,1}$, $T>0$, $q\in(1,\infty)\setminus\{\frac32,3\}$, $\D\vv\in\C(\overline Q)$, $\FF\in\LL^q(Q)$, $\hh\in\WW^{2-1/q,q}(\pOmega)$, and $\uu_0\in\WW^{2-2/q,q}(\Omega)$.
Let $\hh$ and $\uu_0$ satisfy the compatibility conditions: $\div\uu_0=0$, $\hh\cdot\nn=\uu_0\cdot\nn$ on $\pOmega$ and for $q>\frac32$ $\hh=\uu_0$ on $\pOmega$.

Then \eqref{eq:gnslin} has a unique solution $(\uu,\pi)$ in the class
$$ \uu\in\W^{1,q}(0,T;\LL^q(\Omega))\cap\L^q(0,T;\WW^{2,q}(\Omega)), \quad \pi\in\L^q(0,T;\W^{1,q}(\Omega)). $$
\end{theorem}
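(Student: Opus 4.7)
The plan is to view \eqref{eq:gnslin} as a generalized Stokes system whose principal part $-\div[\tenS'(\D\vv)\D\cdot]$ has variable but (by hypothesis) continuous coefficients, and to establish maximal $\L^q$-regularity by reducing to a constant-coefficient model problem and then perturbing back.

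First I would homogenize the boundary and initial data. Using the trace theory for anisotropic Sobolev spaces together with the compatibility conditions on $\hh$ and $\uu_0$, one constructs a divergence-free lift $\uu_*\in\W^{1,q}(0,T;\LL^q(\Omega))\cap\L^q(0,T;\WW^{2,q}(\Omega))$ with $\uu_*(0)=\uu_0$ and $\uu_*|_{\pOmega}=\hh$; the exclusion $q\notin\{3/2,3\}$ enters precisely here, as these are the values at which the trace and compatibility exponents $2-1/q$ and $2-2/q$ become borderline. After subtracting $\uu_*$ one is left with a problem carrying zero initial and boundary data and a modified source still in $\LL^q(Q)$. The lower-order contributions $\div(\uu\otimes\vv+\vv\otimes\uu)+\tenC\uu$ can be moved to the right-hand side and treated as a perturbation: continuity of $\D\vv$ on $\overline Q$ yields boundedness of $\vv$, and because $\uu(0)=\ve 0$ an interpolation estimate produces a small prefactor $T^{1/q}$ on short time intervals.

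It remains to prove maximal regularity for the pure principal system $\dt\uu-\div[\tenS'(\D\vv)\D\uu]+\nabla\pi=\FF$, $\div\uu=0$, with homogeneous data. The standard route is freezing of coefficients: cover $\overline\Omega$ by finitely many small interior and boundary patches, straighten the boundary via a $\C^{2,1}$ diffeomorphism, subordinate a partition of unity, and on each patch replace $\tenS'(\D\vv)(t,x)$ by its value at a fixed reference point. The frozen problem in the half-space or full space is a constant-coefficient generalized Stokes system, and its maximal $\L^q$-regularity is classical, following from the $\mathcal R$-sectoriality of the Stokes operator in $\LL^q$ together with the bounded $H^\infty$-calculus; thanks to \eqref{asm:S1} the ellipticity constants, and hence the resulting regularity constants, are uniform across patches.

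The main obstacle is then the gluing step: commutators of the cut-offs with $\nabla\pi$ and with $\div[\tenS'(\D\vv)\D\cdot]$ generate terms of formally the same order as the principal part, and pressure commutators require an $\L^q$-Helmholtz decomposition (available because $\Omega\in\C^{2,1}$). These terms are shown to be genuinely lower-order by choosing each patch small enough that the oscillation of the continuous field $\tenS'(\D\vv)$ is smaller than any prescribed $\delta>0$; the bad contributions are then absorbed into the left-hand side. A contraction argument over a short time interval yields existence of a strong solution, and uniqueness together with extension to $[0,T]$ follows from the linear a-priori estimate applied to the difference of two solutions.
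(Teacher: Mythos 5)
This theorem is not proved in the paper at all: it is quoted verbatim as Theorem~4.1 of \cite{bothe-pruss} (with a pointer to \cite{solonnikov_lp}), so there is no in-paper argument to compare against. Your outline is, in substance, a compressed account of how the cited result is actually established: homogenization of the data via a divergence-free lift (and yes, the exclusions $q\neq\frac32,3$ do originate in the trace/compatibility exponents $2-1/q$ and $2-2/q$), absorption of the convective and Coriolis terms as lower-order perturbations with a small factor on short time intervals, and localization with freezing of the continuous coefficient $\tenS'(\D\vv)$, handling the pressure commutators through the $\L^q$-Helmholtz projection. The one place where your sketch understates the difficulty is the step you call ``classical'': maximal $\L^q$-regularity for the \emph{constant-coefficient generalized} Stokes model problem in the half-space with no-slip conditions, i.e.\ with principal part $-\div[\ten A\,\D\uu]$ for a general fourth-order tensor $\ten A$ satisfying only \eqref{asm:S1}, is precisely the technical core of \cite{bothe-pruss}; it requires verifying parameter-ellipticity and the Lopatinski\u{\i}--Shapiro condition for the coupled system with the divergence constraint, not merely invoking the $\mathcal R$-sectoriality of the classical Stokes operator. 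Granting that input, your reduction is the standard and correct route, and it is consistent with the proof in the reference the paper relies on.
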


The additional Coriolis term $\tenC\vv$ presents a minor technical obstacle in the existence analysis:
Since $\tenC\in\R^{2\times 2}_{skew}$, the a priori estimate remains unchanged, namely every weak solution $\vv$ satisfies the energy equality
\begin{equation}
\label{eq:energy_equality}
\frac12\norm{\vv(t)}_{2,\Omega}^2 + \int_0^t\int_\Omega\tenS(\D\vv):\D\vv = \int_0^t\int_\Omega\ff\cdot\vv + \frac12\norm{\vv_0}_{2,\Omega}^2, \mbox{ for a.a. }t\in(0,T).
\end{equation}
Considering a sequence $\{\vv^N\}$ of approximate solutions that satisfy \eqref{eq:energy_equality}, the limit passage
$$ \tenC\vv^N \weakly \tenC\vv \mbox{ in }\L^r(0,T;\WW^{1,r}(\Omega)) $$
is a straightforward consequence of strong monotonicity of $\tenS$ and the weak compactness of $\{\vv^N\}$ in $\L^r(0,T;\WW^{1,r}(\Omega))$.
In other aspects the proof of the existence and the uniqueness of a weak solution follows line by line the original one.
Concerning regularity, if we put $\tenC\vv$ to the right hand side, the assumptions of Theorem \ref{th:regularity} are still satisfied, hence the same result holds.

In order to identify the material derivative of $\vv$ and to apply Theorem \ref{th:Lq_gstokes} we will need $\nabla\vv\in\C(\overline Q)$.
Theorem~\ref{th:regularity}, however, guarantees this only for $r=2$ or locally in time.
To overcome this technical obstacle, we use the idea of \citet{wachsmuth-roubicek} and impose the following restriction on the initial condition and the body force.

\begin{assumption}
\label{asm:v0_and_f}
In what follows we suppose that $\Omega$ is a bounded domain in $\R^2$ with $\C^{2,1}$ boundary and $r\in[2,4)$,
the function $\ff$ is extended by $\ve 0$ in the complement of $Q$, and $\supp\vv_0\cap\partial S=\emptyset$.
Further we assume:

\noindent\underline{For $r=2$:}

The initial value $\vv_0$ and the body force $\ff$ satisfy \eqref{eq:asm_f_for_regularity}, \eqref{eq:asm_v0_for_regularity} and \eqref{eq:asm_f_for_bounded_grad}.

\noindent\underline{For $r>2$:}

There exists $\tau>0$ and $\tilde\ff\in\L^\infty(-\tau,T;\LL^{\tilde q}(\Omega))$ with $\dt\ff\in\L^{\tilde q}(-\tau,T;\WW^{-1,\tilde q}(\Omega))$ such that $\ff=\tilde\ff|_{Q}$;
the initial value $\vv_0$ is equal to $\vv(0)$ with $\vv$ being the solution of the problem
\begin{align*}
\dt{\vv} + \div(\vv\otimes\vv) - \div\tenS(\D{\vv}) + \nabla p + \tenC\vv & = \tilde\ff,\\
\div\vv & = 0 &&\mbox{ in }(-\tau,0)\times\Omega,\\
\vv &= 0 &&\mbox{ on }(-\tau,0)\times\pOmega,\\
\vv(-\tau,\cdot) &= \vv_{-\tau} && \mbox{ in }\Omega,
\end{align*}
where $\vv_{-\tau}$ satisfies \eqref{eq:asm_v0_for_regularity} with $\vv_{-\tau}$ in place of $\vv_0$.
\end{assumption}

\begin{corollary}
\label{cor:reg_v}
Let Assumption \ref{asm:v0_and_f} hold true.
Then there is a unique weak solution $\vv$ to \eqref{eq:gns} and the associated pressure $p$ which satisfies:
$$ \vv\in\L^\infty(0,T;\WW^{2,q}(\Omega)), $$
$$ \nabla\vv\in\C^{0,\alpha}(\overline Q), $$
$$ \dt\vv\in\L^\infty(0,T;\LL^2(\Omega))\cap\L^2(0,T;\WWbcdiv^{1,2}(\Omega)) $$
$$ p\in\L^\infty(0,T;\W^{1,q}(\Omega))\cap\C(\overline Q) $$
for some $q>2$ and $\alpha>0$.
\end{corollary}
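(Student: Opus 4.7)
My plan is to read the corollary as a bookkeeping exercise that assembles Theorem~\ref{th:regularity} with the tailored hypotheses of Assumption~\ref{asm:v0_and_f}. The existence and uniqueness part is already settled by the discussion preceding the corollary, where it is noted that the Coriolis term is skew-symmetric, so the energy identity \eqref{eq:energy_equality} and the monotonicity-based limit passage proceed verbatim. Thus what remains is to verify all the regularity assertions, and the only genuinely new thing to worry about is propagating the Hölder bound on $\nabla\vv$ up to the initial time $t=0$.

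First I would absorb the Coriolis term into the right-hand side: define the effective body force $\tilde\ff := \ff - \tenC\vv$ (and analogously for the extended problem on $(-\tau,T)$ in the case $r>2$). A short bootstrap shows $\tilde\ff$ satisfies the same integrability hypotheses \eqref{eq:asm_f_for_regularity} and \eqref{eq:asm_f_for_bounded_grad} as $\ff$: once the first regularity conclusion of Theorem~\ref{th:regularity} is applied, $\vv\in L^\infty(0,T;\LL^{\tilde q}(\Omega))$ and $\dt\vv\in L^{\tilde q}(0,T;\WW^{-1,\tilde q}(\Omega))$ follow from the Sobolev embedding $W^{2,s}\hookrightarrow L^{\tilde q}$ in two dimensions, and the constant tensor $\tenC$ inherits all of this. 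Hence applying Theorem~\ref{th:regularity} to the problem with $\tilde\ff$ on the right-hand side yields the Sobolev regularity $\vv\in L^\infty(0,T;\WW^{2,q}(\Omega))$, $\dt\vv\in L^\infty(0,T;\LL^2)\cap L^2(0,T;\WWbcdiv^{1,2})$ and the Hölder/$W^{1,q}$ regularity of $\nabla\vv$ and $p$ on every subinterval $[\epsilon,T]$.

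In the case $r=2$ this is essentially the whole story, because Assumption~\ref{asm:v0_and_f} supplies all of \eqref{eq:asm_f_for_regularity}, \eqref{eq:asm_v0_for_regularity} (with $\rho=2$) and \eqref{eq:asm_f_for_bounded_grad} directly on $(0,T)$; the Hölder continuity then extends to $t=0$ by compatibility of $\vv_0\in\WW^{2,2}(\Omega)\cap\WWbcdiv^{1,2}(\Omega)$ with the momentum equation (which determines $\dt\vv(0)$ and $p(0)$ in the required classes via the linear Stokes resolvent). In the case $r>2$ this argument by itself would only give Hölder regularity on $[\epsilon,T]$, and this is where the extension device built into Assumption~\ref{asm:v0_and_f} is essential: I would apply Theorem~\ref{th:regularity} not on $(0,T)$ but on the extended interval $(-\tau,T)$, with the initial datum $\vv_{-\tau}$ and the extended force $\tilde\ff$ (which satisfy \eqref{eq:asm_v0_for_regularity} and \eqref{eq:asm_f_for_bounded_grad} on $(-\tau,T)$). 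The conclusion provides Hölder regularity on $[-\tau+\epsilon,T]$ for any $\epsilon\in(0,\tau+T)$; picking $\epsilon=\tau$ places $t=0$ strictly inside the Hölder regularity interval, so restricting back to $(0,T)\times\Omega$ yields $\nabla\vv,p\in C^{0,\alpha}(\overline Q)$.

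The only delicate step is this last one, namely controlling the Hölder norm up to $t=0$; once the extension is in place, it reduces to the interior statement of Theorem~\ref{th:regularity} with a shifted time origin. All other assertions of the corollary are then immediate from the corresponding bullets of Theorem~\ref{th:regularity}, the Coriolis reduction explained above, and uniqueness of the weak solution (which is preserved under the skew-symmetric perturbation, as already observed).
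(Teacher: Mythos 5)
Your proposal is correct and follows essentially the same route the paper intends: move the skew-symmetric Coriolis term to the right-hand side, check that the hypotheses of Theorem~\ref{th:regularity} are preserved, and for $r>2$ invoke the backward-in-time extension built into Assumption~\ref{asm:v0_and_f} (the Wachsmuth--Roubí\v{c}ek device) so that $t=0$ lies in the interior of the regularity interval, which is precisely why the paper states the result as a corollary without separate proof. Your additional remark on the $r=2$ case is a reasonable gloss on a point the paper itself leaves implicit.
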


\begin{remark}
The above result applies only to the unperturbed domain, i.e. $\eps=0$.
For the other cases we assume that the solution $(\vEpsB,\pEpsB)$ belongs just to the class of the weak solutions.
\end{remark}

Let us point out that equation \eqref{eq:def_J} which defines $J$ is not suitable for weak solutions in general, since the energy inequality does not provide enough information about the trace of $p$ and $\D\vv$.
We therefore introduce an alternative definition that requires less regularity.
Let us fix an arbitrary divergence free function $\xxi\in\C^\infty_c(B,\R^2)$ such that $\xxi=\dd$ in a vicinity of $S$.
Then, integrating \eqref{eq:def_J} by parts and using \eqref{eq:gns} yields:
\begin{equation}
\label{eq:J_volume}
J(\Omega) = \int_\Omega\left(\vv(T)-\vv_0\right)\cdot\xxi + \int_Q\left[ \left(\tenC\vv-\ff\right)\cdot\xxi + \left(\tenS(\D{\vv})-\vv\otimes\vv\right):\nabla\xxi\right].
\end{equation}
Note that this identity is finite for any $\vv\in\L^2(0,T;\WW^{1,2}(\Omega))$ and $\vv(T)\in\LL^2(\Omega)$.

\subsection{Deformation of the shape}

Arguing similarly as in Section \ref{sec:weak_form} we find that $(P(\OmegaE))$ has a unique weak solution $\vEpsB\in\L^r(0,T;\WWbcdiv^{1,r}(\OmegaE))$ with $\dt\vEpsB\in\left(\L^2(0,T;\WWbcdiv^{1,2}(\OmegaE))\right)^*$ which satisfies the energy inequality
\begin{equation}
\label{eq:energy_ineq_eps}
\frac12\norm{\vEpsB(t)}_{2,\OmegaE}^2 + \int_0^t\int_\OmegaE\tenS(\D\vEpsB):\D\vEpsB = \int_0^t\int_\OmegaE\ff\cdot\vEpsB + \frac12\norm{\vv_0}_{2,\OmegaE}^2
\end{equation}
for a.a. $t\in(0,T)$ and for $t=T$.

Let us introduce the following notation:
We will denote by $\Djac\ve T$ the Jacobian matrix whose components are $(\Djac\ve T)_{ij}=(\nabla\ve T)_{ji}=\partial_i T_j$.
Further,
$$ \N(\xx) := \gdet(\xx)\M^{-1}(\xx),\quad \M(\xx) := \I+\eps \Djac\ve T(\xx),\quad \gdet(\xx):=\det\M(\xx). $$
One can easily check that the matrix $\N$ and the determinant $\gdet$ admit the expansions:
\begin{equation}
\label{eq:expansion_gN}
\gdet = 1+\eps\div\ve T+O(\eps^2),\quad \N = \I+\eps\O+O(\eps^2), \quad \O=(\div\ve T)\I - \Djac\ve T,
\end{equation}
where the symbol $O(\eps^2)$ denotes a function whose norm in $\C^1(\oOmega)$ is bounded by $C\eps^2$.

The value of the shape functional for $\OmegaE$ is given by
$$ J(\OmegaE) := \int_\OmegaE\left(\vEpsB(T)-\vv_0\right)\cdot\xxi_\eps + \int_{Q_\eps}\left[ \left(\tenC\vEpsB-\ff\right)\cdot\xxi_\eps + \left(\tenS(\D{\vEpsB})-\vEpsB\otimes\vEpsB\right):\nabla\xxi_\eps\right], $$
where $\xxi_\eps := (\N^{-\top}\xxi)\circ\yy^{-1}$.
Using the properties of the Piola transform (see e.g. Theorem 1.7-1 in \cite{ciarlet1994mathematical}) one can check that $\div\xxi_\eps=0$.
If $\vEpsB$ and $\pEpsB$ were sufficiently smooth, it would hold that
\begin{equation}
\label{eq:formal_def_Jeps}
J(\OmegaE) = \int_0^T \int_{\partial S_\eps} (\tenS(\D\vEpsB)-\pEpsB\I)\nn\cdot\dd.
\end{equation}
Nevertheless, as opposed to \eqref{eq:gns}, we do not require any additional regularity of the solution to the perturbed problem $(P(\OmegaE))$ and hence the expression in \eqref{eq:formal_def_Jeps} need not be well defined.

We introduce the auxiliary function $\vDot$:
$$ \vDot:=\lim_{\eps\to 0}\frac{\N^\top\vEpsB\circ\yy-\vv}\eps, $$
which is related to the material derivative $\dot\vv$ by the identity
$$ \vDot = \O^\top\vv + \dot\vv. $$
For the justification of the results of the paper we will use $\vDot$ since, unlike the material derivative, it preserves the divergence free condition.

\section{Main results}
\label{sec:main}

The first step is the existence of the function $\vDot$ and hence also of the material derivative.

\begin{theorem}
\label{th:mat_der}
Let Assumption \ref{asm:v0_and_f} be satisfied.
Then the function $\vDot$ exists and is the unique weak solution of \eqref{eq:gnslin} with the data
\begin{subequations}
\label{eq:vDot}
\begin{multline}
\label{eq:def_Ap0}
\FF = \A_0' := (\O+\O^\top-\I\tr\O)\dt{\vv}
+\div(\vv\otimes\O^\top\vv) + \O\div(\vv\otimes\vv)\\
+\div\left[\tenS'(\D{\vv})\left(((\O-\I\tr\O)\nabla\vv)_{sym} - \D(\O^\top\vv)\right) + \O^\top\tenS(\D{\vv})\right]\\
-\O\div\tenS(\D{\vv})
+\left((\O-\I\tr\O)\tenC + \tenC\O^\top\right)\vv
+(\I\tr\O-\O)\ff+(\nabla\ff)\ve T,
\end{multline}
\begin{align}
\hh &= \ve 0,\\
\uu_0 &= \vDot_0:=\O^\top\vv_0 + (\nabla\vv_0)\ve T.
\end{align}
\end{subequations}
The following estimate holds:
\begin{equation}
\label{eq:est_vDot}
\norm{\vDot}_{\L^\infty(0,T;\LL^2(\Omega))\cap\L^2(0,T;\WW^{1,2}(\Omega))} \le C\norm{\A_0'}_{\L^2(0,T;\WWbcdiv^{1,2}(\Omega))^*} \le C\norm{\ve T}_{\C^2(\oOmega)}.
\end{equation}
\end{theorem}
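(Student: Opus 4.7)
The plan is to work in the fixed reference domain $\Omega$ by introducing the Piola-transformed velocity $\vEps := (\N^\top \vEpsB) \circ \yy$, which inherits the divergence-free property and the zero boundary condition from $\vEpsB$; this is the function whose difference with $\vv$ appears in the definition of $\vDot$. Pulling back the weak formulation of $(P(\OmegaE))$ by $\yy$ produces an $\eps$-dependent system in $Q$ for $\vEps$ whose coefficients are rational in $\M$, $\N$, $\gdet$ and $\Djac\ve T$; this pull-back is carried out in Section \ref{sec:fixed_form}. Subtracting the original system for $\vv$ from the pulled-back system and dividing by $\eps$ yields an equation for $\vDot_\eps := (\vEps - \vv)/\eps$ of the form \eqref{eq:gnslin} with $\eps$-dependent data $(\FF_\eps,\ve 0,\vDot_{0,\eps})$.

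First I would identify the formal limits of the data as $\eps\to 0$. Using the expansions \eqref{eq:expansion_gN}, the expansion $\ff\circ\yy=\ff+\eps(\nabla\ff)\ve T+O(\eps^2)$, and the mean-value representation $\tenS(\D\vEps)-\tenS(\D\vv)=\bigl(\int_0^1\tenS'(\D\vv+t(\D\vEps-\D\vv))\,dt\bigr)\D(\vEps-\vv)$, every $\eps$-dependent term in the transformed system can be developed to first order. Collecting the first-order contributions of the inertial, convective, stress, Coriolis and body-force terms reproduces exactly the expression \eqref{eq:def_Ap0} for $\A_0'$, and the same expansion applied at $t=0$ gives $\vDot_{0,\eps}\to\vDot_0=\O^\top\vv_0+(\nabla\vv_0)\ve T$. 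This step is routine but involves lengthy bookkeeping.

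The technical core is the uniform-in-$\eps$ Lipschitz estimate
\[
\norm{\vDot_\eps}_{\L^\infty(0,T;\LL^2(\Omega))\cap\L^2(0,T;\WW^{1,2}(\Omega))}\le C\norm{\ve T}_{\C^2(\oOmega)},
\]
which belongs to Section \ref{sec:existence_mat_der}. Testing the equation for $\vDot_\eps$ against $\vDot_\eps$ and exploiting the lower bound in \eqref{asm:S1}, evaluated along the segment joining $\D\vv$ and $\D\vEps$, produces a coercive quadratic form in $\D\vDot_\eps$ on the left-hand side. The convective and Coriolis contributions are absorbed using the $L^\infty$ bound on $\nabla\vv$ supplied by Corollary \ref{cor:reg_v} and a Gronwall argument in time. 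The right-hand side $\FF_\eps$ is estimated in $\L^2(0,T;(\WWbcdiv^{1,2}(\Omega))^*)$ by $C\norm{\ve T}_{\C^2(\oOmega)}$ via the explicit expansions \eqref{eq:expansion_gN} together with the regularity of $\vv$, $\ff$ and $\vv_0$ furnished by Assumption \ref{asm:v0_and_f} and Corollary \ref{cor:reg_v}.

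Armed with this bound one extracts a weakly convergent subsequence $\vDot_\eps\weakly\vDot$ in $\L^2(0,T;\WWbcdiv^{1,2}(\Omega))$, upgrades to strong convergence in $\L^2(Q)$ via the Aubin--Lions lemma, and passes to the limit in each term of the equation for $\vDot_\eps$. The only delicate passage is the linearization of the stress: from $\D\vEps-\D\vv=\eps\D\vDot_\eps$, the continuity of $\tenS'$ and the $L^\infty$-control of $\D\vv$, one shows that $\eps^{-1}[\tenS(\D\vEps)-\tenS(\D\vv)]-\tenS'(\D\vv)\D\vDot_\eps\to 0$ in a suitable norm, so that the quasilinear term converges weakly to $\tenS'(\D\vv)\D\vDot$. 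Uniqueness of the limit follows from the well-posedness of \eqref{eq:gnslin} established in Section \ref{sec:well-posedness-lin}, which in addition implies that the whole family converges. I expect the main obstacle to be the simultaneous handling of the $(r-2)$-weighted coercivity of $\tenS'$ and the higher-order remainder in the stress expansion: one must keep the remainder controllable by $\eps\norm{\vDot_\eps}$ in a norm compatible with the energy test, so that no uncontrolled feedback enters the Gronwall loop, which is exactly where the structural assumption $r<4$ together with Corollary \ref{cor:reg_v} is needed.
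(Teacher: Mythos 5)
Your overall strategy coincides with the paper's: Piola transform to the fixed domain (Section \ref{sec:fixed_form}), difference quotient $\uEps=(\vEps-\vv)/\eps$, first-order expansion of the data, energy estimate plus Gronwall, weak compactness, and identification of the limit through the uniqueness result of Section \ref{sec:well-posedness-lin}. However, the "lengthy bookkeeping" you defer hides the one step on which the proof actually turns, and the way you set it up would fail for $r>2$. You propose to linearize via $\tenS(\D\vEps)-\tenS(\D\vv)=\bigl(\int_0^1\tenS'(\D\vv+t(\D\vEps-\D\vv))\,dt\bigr)\D(\vEps-\vv)$. But the pulled-back weak form contains $\gdet\,\tenS(\Deps\vEps):\Deps\pphi$, not $\tenS(\D\vEps):\D\pphi$, so your decomposition leaves the residual $\frac1\eps\bigl[\tenS(\Deps\vEps)-\tenS(\D\vEps)\bigr]$. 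This residual is an averaged $\tenS'$ evaluated at arguments comparable to $\D\vEps$ --- controlled only in $\L^r(Q)$ and carrying the $(r-2)$-growth of \eqref{asm:S1} --- multiplied by $\frac1\eps(\Deps\vEps-\D\vEps)$, which is again only $\L^r$-bounded; the product lies in $\L^{r/(r-1)}(Q)$ and cannot be paired with $\D\uEps\in\LL^2(Q)$ in the energy test unless $r=2$. The paper's fix, which you do not identify (though you correctly sense the obstacle in your last sentence), is never to separate $\Deps$ from the stress: the difference quotient is written as $\frac1\eps\gdet(\tenS(\Deps\vEps)-\tenS(\Deps\vv)):\Deps\pphi$, so the mean-value theorem produces $\Deps\uEps$ directly and the only residual, $\AEps^3$, involves the smooth solution $\vv$ alone and is Lipschitz in $\eps$ by Corollary \ref{cor:reg_v}. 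The same device is applied to the time derivative: the weighted operator $\gdet\N^{-1}\N^{-\top}\dt\uEps$ is kept intact on the left because no uniform estimate for $\dt\vEps$ is available.

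Two smaller points. The expansion $\ff\circ\yy=\ff+\eps(\nabla\ff)\ve T+O(\eps^2)$ is not literal: under Assumption \ref{asm:v0_and_f} the force $\ff$ has no spatial derivative, $(\nabla\ff)\ve T$ is defined only by duality, and the limit of $\frac1\eps(\ff\circ\yy-\ff)$ requires the Hardy-inequality argument of Section \ref{sec:lip_est_Aeps}; this is also why $\A_0'$ is not integrable, Theorem \ref{th:Lq_gstokes} is inapplicable, and the weaker well-posedness lemma of Section \ref{sec:well-posedness-lin} is indispensable (you do invoke that section, which is right). Finally, passing to the limit in the quasilinear term requires the strong convergence $\D\vEps\to\D\vv$ in $\L^r(Q)$ established separately in Section \ref{sec:stability} by a monotonicity argument; the $\LL^2$-bound on $\D\uEps$ gives $\D\vEps-\D\vv\to 0$ only in $\LL^2$, which does not by itself supply the uniform integrability of $|\D\vEps|^{r}$ needed to pass the Nemytski\u\i\ map $\tenS'$ to the limit in $\L^{r/(r-2)}$.
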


Due to the term $(\nabla\ff)\ve T$ the right hand side $\A_0'$ is not integrable, thus one cannot apply Theorem \ref{th:Lq_gstokes}.
Instead, the well-posedness will be investigated in Section \ref{sec:well-posedness-lin}.
The next result concerns the existence of the shape gradient.

\begin{theorem}
\label{th:sh_der_J}
Under the assumptions of Theorem \ref{th:mat_der}, the shape gradient of $J$ reads
$$ dJ(\Omega,\ve T) = J_\vv(\vDot)+J_e(\ve T), $$
where the dynamical part $J_\vv$ and the geometrical part $J_e$ is given by
\begin{equation*}
J_\vv(\vDot) = \int_\Omega\left(\vDot(T)-\vDot_0\right)\cdot\xxi + \int_Q \left[ \left(\tenC\vDot\right)\cdot\xxi + \left(\tenS'(\D{\vv})\D{\vDot}-\vDot\otimes\vv-\vv\otimes\vDot\right):\nabla\xxi\right],
\end{equation*}
\begin{multline*}
J_e(\ve T) = \int_\Omega\left(\I\tr\O-\O-\O^\top\right)\left(\vv(T)-\vv_0\right)\cdot\xxi\\
+ \int_Q\Big\{ \big[\left(\I\tr\O-\O\right)\tenC\vv - \tenC\O^\top\vv
- \left(\I\tr\O-\O\right)\ff - (\nabla\ff)\ve T\big]\cdot\xxi\\
+\big[ \vv\otimes\O^\top\vv + \tenS'(\D{\vv})\left((\O\nabla\vv-\nabla(\O^\top\vv))_{sym}-(\tr\O)\D\vv\right) + \O^\top\tenS(\D\vv)\big]:\nabla\xxi\\
+ \big[\vv\otimes\vv - \tenS(\D\vv)\big]:\nabla(\O^\top\xxi) \Big\},
\end{multline*}
respectively.
In particular, as $\vDot$ depends continuously on $\ve T$, the mapping
$$ \ve T \mapsto dJ(\Omega,\ve T) $$
is a bounded linear functional on $\C^2(\R^2,\R^2)$.
\end{theorem}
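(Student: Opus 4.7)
The plan is to rewrite $J(\OmegaE)$ as an integral over the fixed domain $\Omega$ via the diffeomorphism $\yy$, perform a first-order Taylor expansion in $\eps$, and identify the coefficient of $\eps$. Writing $\vEps := \N^\top(\vEpsB\circ\yy)$, Theorem \ref{th:mat_der} and the Lipschitz estimates of Section \ref{sec:existence_mat_der} give $(\vEps-\vv)/\eps\to\vDot$ in $\L^\infty(0,T;\LL^2(\Omega))\cap\L^2(0,T;\WW^{1,2}(\Omega))$, while Corollary \ref{cor:reg_v} provides the regularity of $(\vv,p)$ needed to control the nonlinear terms in the limit passage.

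First I would carry out the change of variables $\xx\mapsto\yy(\xx)$ in each integral defining $J(\OmegaE)$: volume elements pick up the factor $\gdet$, gradients transform by $(\nabla\psi)\circ\yy=\M^{-\top}\nabla(\psi\circ\yy)$, the test field becomes $\xxi_\eps\circ\yy=\N^{-\top}\xxi$, and the state pulls back to $\vEpsB\circ\yy=\N^{-\top}\vEps$. This rewrites $J(\OmegaE)=\tilde J(\eps,\vEps)$ as a functional on $\Omega$ depending smoothly on $\eps$. Using \eqref{eq:expansion_gN} I then expand $\gdet=1+\eps\div\ve T+O(\eps^2)$, $\N=\I+\eps\O+O(\eps^2)$ and $\N^{-\top}=\I-\eps\O^\top+O(\eps^2)$, substitute $\vEps=\vv+\eps\vDot+o(\eps)$, Taylor-expand $\tenS$ using the regularity from \eqref{asm:S}, and write $\ff\circ\yy=\ff+\eps(\nabla\ff)\ve T+o(\eps)$. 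Collecting the coefficients of $\eps$ separates the result into a part linear in $\vDot$, which reproduces the stated $J_\vv(\vDot)$, and a purely geometric part depending only on $\vv$, $\ff$, $\xxi$, and $\O$, which is precisely $J_e(\ve T)$.

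Finally, $J_e$ is manifestly a bounded linear functional on $\C^2(\R^2,\R^2)$ since $\O$ is linear in $\nabla\ve T$ and the remaining coefficients are fixed $\L^\infty$-fields by Corollary \ref{cor:reg_v}; the map $\ve T\mapsto\vDot$ is linear (the data \eqref{eq:vDot} depend linearly on $\ve T$ and $\O$) and continuous from $\C^2(\oOmega)$ into $\L^\infty(0,T;\LL^2(\Omega))\cap\L^2(0,T;\WW^{1,2}(\Omega))$ by \eqref{eq:est_vDot}, so $\ve T\mapsto J_\vv(\vDot)$ is bounded linear as well. The main obstacle is the bookkeeping for the nonlinear stress term: the pulled-back strain is not $\D\vEps$ but involves $\M^{-\top}$-factors, and expanding $\tenS$ to first order produces precisely the combination $(\O\nabla\vv-\nabla(\O^\top\vv))_{sym}-(\tr\O)\D\vv$ appearing in $J_e$; equally delicate is the justification of the $o(\eps)$ remainders, for which one leans on the $\WW^{2,q}$-regularity of $\vv$ together with the strong convergence $\vEps\to\vv$ furnished by the shape-stability analysis of Section \ref{sec:stability}.
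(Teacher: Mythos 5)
Your proposal matches the paper's own argument: the paper likewise works from the pulled-back expression \eqref{eq:Jeps_volume}, splits the difference quotient $(J(\OmegaE)-J(\Omega))/\eps$ into a part $J_1^\eps$ carrying $\uEps=(\vEps-\vv)/\eps$ and a purely geometric part $J_2^\eps$, and passes to the limit using the convergences of Sections \ref{sec:stability} and \ref{sec:existence_mat_der} together with the same difference-quotient computations as in Section \ref{sec:lip_est_Aeps}, concluding boundedness of $\ve T\mapsto dJ(\Omega;\ve T)$ from \eqref{eq:est_vDot}. The only caveat is that your substitution $\vEps=\vv+\eps\vDot+o(\eps)$ overstates the available convergence --- $\uEps$ converges to $\vDot$ only weakly in $\L^2(0,T;\WW^{1,2}(\Omega))$ and strongly only in $\LL^z(Q)$ for $z<4$ --- but since $J_\vv$ pairs $\D{\vDot}$ against fixed bounded fields and the quadratic convective terms are handled by the strong $\LL^z(Q)$ convergence of $\uEps$ and $\vEps$, the limit passage goes through exactly as in the paper.
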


Based on the previous result we can deduce that the shape gradient has the form of a distribution supported on the boundary of the obstacle.
Since this representation is unique, the formal results derived in Section \ref{sec:shape-derivatives-formal} are justified provided that the shape derivatives and adjoints exist and are sufficiently regular. This issue will be addressed in the following section.
At this point we state the final result.

\begin{corollary}
Let Assumption \ref{asm:v0_and_f} be satisfied.
Then
\begin{enumerate}
\item[(i)] the shape derivative $\vv'$ exists and is the unique weak solution to \eqref{eq:gnslin} with $\FF=\ve 0$, $\hh=-\dfrac{\partial\vv}{\partial\nn}(\ve T\cdot\nn)$, $\uu_0=\ve 0$;
\item[(ii)] the shape gradient of $J$ satisfies \eqref{eq:J_volume_gradient};
\item[(iii)] the adjoint problem \eqref{eq:gnsadj} has a unique weak solution that satisfies for arbitrary $\delta\in(0,T)$:
$$ \ww\in\L^2(0,T-\delta;\WW^{2,2}(\Omega)), \quad s\in\L^2(0,T-\delta;\W^{1,2}(\Omega)). $$
\item[(iv)] Finally, if $\ff\in\L^2(0,T;\WW^{1,2}(\Omega))$ then the shape gradient of $J$ has the representation \eqref{eq:J_surface_gradient} in the following sense:
\begin{equation}
\label{eq:dJ_adjoint_lim}
dJ(\Omega;\ve T) = - \lim_{\delta\searrow 0}\int_0^{T-\delta}\int_{\partial S}\left[\left(\tenS'(\D\vv)^\top\D\ww-s\I\right):\dfrac{\partial\vv}{\partial\nn}\otimes\nn + \ff\cdot\dd\right]\ve T\cdot\nn.
\end{equation}
\end{enumerate}
\end{corollary}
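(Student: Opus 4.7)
The plan is to reduce each of the four claims to tools already at our disposal: Theorems~\ref{th:mat_der} and~\ref{th:sh_der_J}, the $\L^q$-theory of Theorem~\ref{th:Lq_gstokes}, and the well-posedness of the linearized system developed in Section~\ref{sec:well-posedness-lin}. The unifying relation is the identity $\dot\vv = \vv' + (\nabla\vv)\ve T$ linking the shape and material derivatives, which together with $\vDot = \O^\top\vv + \dot\vv$ yields the decomposition $\vv' = \vDot - \O^\top\vv - (\nabla\vv)\ve T$.

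For part (i), I would take this formula as the definition of $\vv'$. Then $\vv'$ lies in the energy class by Theorem~\ref{th:mat_der} and Corollary~\ref{cor:reg_v}, and its governing equation is obtained by subtracting from the right-hand side \eqref{eq:def_Ap0} the contributions produced by inserting $\O^\top\vv + (\nabla\vv)\ve T$ into the linearized operator of \eqref{eq:gnslin}; every ``geometric'' term in $\A_0'$ cancels against the state equation for $\vv$, leaving $\FF = \ve 0$. The boundary trace $\vv' = -(\partial\vv/\partial\nn)(\ve T\cdot\nn)$ arises by differentiating $\vEpsB(\yy(\xx)) = 0$ on $\pOmega$ at $\eps = 0$ and using $\vv = 0$ on $\pOmega$ so that $(\nabla\vv)\ve T$ reduces to its normal component, while $\uu_0 = \ve 0$ follows from $\vDot(0) = \vDot_0 = \O^\top\vv_0 + (\nabla\vv_0)\ve T$. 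Uniqueness comes from the linearized well-posedness of Section~\ref{sec:well-posedness-lin}. Part (ii) is then obtained by substituting $\vDot = \vv' + \O^\top\vv + (\nabla\vv)\ve T$ into $J_\vv(\vDot) + J_e(\ve T)$: the $\vv'$-dependent terms reproduce \eqref{eq:J_volume_gradient} exactly, and the purely geometric residue must yield the boundary term $-\int_0^T\int_{\partial S}(\ff\cdot\dd)\ve T\cdot\nn$, which is verified by testing \eqref{eq:weak_form} with the admissible divergence-free fields $(\I\tr\O - \O - \O^\top)\xxi$ and $\O^\top\xxi$, regrouping by the expansion \eqref{eq:expansion_gN}, and using that $\xxi = \dd$ in a neighbourhood of $S$.

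For (iii), the time substitution $\tilde\ww(t,\xx) = \ww(T-t,\xx)$, $\tilde s(t,\xx) = -s(T-t,\xx)$ converts \eqref{eq:gnsadj} into a forward generalized Stokes system whose coefficient $\D\vv$ lies in $\C(\overline Q)$ by Corollary~\ref{cor:reg_v}, with vanishing initial datum and boundary datum $\dd$. These data violate the compatibility $\uu_0 = \hh$ on $\pOmega$ required in Theorem~\ref{th:Lq_gstokes}, so the theorem has to be applied on any sub-interval $[\delta,T]$ in reversed time: parabolic smoothing places $\tilde\ww(\delta)$ in the requisite trace space and makes it compatible with $\dd$, and Theorem~\ref{th:Lq_gstokes} then yields the asserted regularity after undoing the time reversal.

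The principal technical step is (iv). I would test \eqref{eq:gnsadj} with $\vv'$ on $(0,T-\delta)\times\Omega$, the regularities from (i) and (iii) making every boundary and time-slice integral meaningful. Integration by parts in time and space transforms the bulk term into the tested form of \eqref{eq:gnslin} for $\vv'$ with $\FF = \ve 0$ and $\uu_0 = \ve 0$, which upon recombination reproduces the volume formula of (ii) up to the endpoint contribution $\int_\Omega\vv'(T-\delta)\cdot\ww(T-\delta)$ that vanishes as $\delta\downto 0$ because $\ww(T) = \ve 0$ and $\vv'$ is bounded in $\LL^2(\Omega)$ uniformly in time. The non-homogeneous Dirichlet trace of $\vv'$ combined with $\ww|_\Sigma = \dd$ generates the stress integrand on $\partial S$ in \eqref{eq:J_surface_gradient}, while the $\ff\cdot\dd$ contribution is extracted by splitting $\ff\cdot\xxi = \ff\cdot\dd + \ff\cdot(\xxi - \dd)$ and applying the divergence theorem to isolate the trace on $\partial S$ weighted by $\ve T\cdot\nn$. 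The main obstacle here is the limit in \eqref{eq:dJ_adjoint_lim}: the adjoint pressure $s$ need not be integrable up to $t = T$, so the surface integrals must be shown to converge by a careful decomposition, and this is precisely where the extra hypothesis $\ff\in\L^2(0,T;\WW^{1,2}(\Omega))$ enters, providing enough regularity on each slab $[0,T-\delta]$ to allow the passage to the limit.
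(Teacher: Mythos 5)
Your route coincides with the paper's: the corollary is obtained there as a consequence of Theorems \ref{th:mat_der} and \ref{th:sh_der_J}, the well-posedness lemmas of Section \ref{sec:well-posedness-lin}, and the formal calculus of Section \ref{sec:shape-derivatives-formal}; in particular your treatment of (i), (iii) and (iv) --- the decomposition $\vv'=\vDot-\O^\top\vv-(\nabla\vv)\ve T$, the time reversal with Theorem \ref{th:Lq_gstokes} applied on $[0,T-\delta]$ where the terminal datum $\ww(T-\delta,\cdot)$ becomes compatible with $\dd$, and the duality pairing of the adjoint with $\vv'$ with the endpoint term killed by $\ww(T-\delta)\to\ve 0$ in $\LL^2(\Omega)$ --- is exactly what the paper does or sketches (cf.\ Lemma \ref{th:existence_adj} and the remark following the corollary). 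The one genuine divergence is in (ii): the paper invokes the structure theorem of \cite{sokolowski_zolesio}, i.e.\ the uniqueness of the representation of $dJ(\Omega;\cdot)$ as a boundary-supported distribution, to transfer the formal identity to \eqref{eq:J_volume_gradient}, whereas you verify it by substituting $\vDot=\vv'+\O^\top\vv+(\nabla\vv)\ve T$ into $J_\vv(\vDot)+J_e(\ve T)$ directly. Your route is more self-contained but computationally heavier; the paper's is shorter but leans on an external black box.

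Two slips to repair, neither fatal. First, the fields $(\I\tr\O-\O-\O^\top)\xxi$ and $\O^\top\xxi$ are in general \emph{not} divergence free (one computes $\div(\O^\top\xxi)=-\tr(\Djac\ve T\,\Djac\xxi)$, which does not vanish for generic $\ve T$), so they are inadmissible in \eqref{eq:weak_form}; you must reinstate the pressure --- legitimate by Corollary \ref{cor:reg_v}, which gives $p\in\L^\infty(0,T;\W^{1,q}(\Omega))$ --- and track the resulting terms $\int_Q p\,\div(\O^\top\xxi)$ through the cancellation. (By contrast $\O^\top\vv+(\nabla\vv)\ve T$ \emph{is} solenoidal, so your $\vv'$ is indeed an admissible solution of \eqref{eq:gnslin}.) Second, the hypothesis $\ff\in\L^2(0,T;\WW^{1,2}(\Omega))$ in (iv) is not what controls the $\delta\downto 0$ limit of the adjoint terms --- that is supplied by the $\L^2(\pOmega)$-limits of $\D\ww$ and $s$ from Lemma \ref{th:existence_adj}; its role is to give the trace of $\ff$ on $\partial S$ a meaning, without which the integrand $\ff\cdot\dd$ in \eqref{eq:dJ_adjoint_lim}, and your divergence-theorem step isolating that contribution, would be undefined.
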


Let us note that the formula \eqref{eq:J_surface_gradient} is here replaced by \eqref{eq:dJ_adjoint_lim}.
This is due to the fact that the terminal and boundary conditions for $\ww$ do not satisfy the hypothesis of Theorem \ref{th:Lq_gstokes}.
In fact, \eqref{eq:J_surface_gradient} holds if we replace the values $\D\ww(T,\cdot)$ and $s(T,\cdot)$ by their $\L^2(\pOmega)$-limits for $t\to T$, see Lemma \ref{th:existence_adj}.

\section{Well-posedness of \eqref{eq:gnslin} and of \eqref{eq:gnsadj}}
\label{sec:well-posedness-lin}

In this section we show that the linearized system \eqref{eq:gnslin} has a unique solution under assumptions that are weaker than those of Theorem \ref{th:Lq_gstokes}.
Indeed, we have the following result.
\begin{lemma}
Let $\Omega\in\C^{0,1}$, $\vv\in\WW^{1,\infty}(Q)$, $\FF\in\L^2(0,T;\WWbcdiv^{1,2}(\Omega)^*)$, $\hh\in\L^2(0,T;\WW^{1,2}(\Omega))$, $\dt\hh\in\L^2(0,T;\WWbcdiv^{1,2}(\Omega)^*)$ and $\uu_0-\hh(0,\cdot)\in\LL^2_{0,\div}(\Omega)$.
Then \eqref{eq:gnslin} admits a unique weak solution which satisfies the estimate:
\begin{multline*}
\norm{\uu}_{\L^\infty(0,T;\LL^2(\Omega))\cap\L^2(0,T;\WWbcdiv^{1,2}(\Omega))} \le C(\norm{\uu_0}_{2,\Omega}+\norm{\FF}_{\L^2(0,T;\WWbcdiv^{1,2}(\Omega)^*)}\\
+\norm{\hh}_{\L^2(0,T;\WW^{1,2}(\Omega))}+\norm{\dt\hh}_{\L^2(0,T;\WWbcdiv^{1,2}(\Omega)^*)}).
\end{multline*}
\end{lemma}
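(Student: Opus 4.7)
My plan is to homogenise the non-homogeneous Dirichlet datum by a divergence-free lifting and then apply a standard Galerkin scheme on $\WWbcdiv^{1,2}(\Omega)$. Because the problem is linear in $\uu$, the actual work is concentrated in the energy estimate, whose form is already predetermined by the claim.

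First I would construct a time-dependent divergence-free lifting $\hh^{*}\in\L^{2}(0,T;\WW^{1,2}(\Omega))$ with $\dt\hh^{*}\in\L^{2}(0,T;\WWbcdiv^{1,2}(\Omega)^{*})$ such that $\hh^{*}=\hh$ on $\Sigma$, $\div\hh^{*}=0$ in $Q$, and $\norm{\hh^{*}(t)}_{\WW^{1,2}(\Omega)}\le C\norm{\hh(t)}_{\WW^{1,2}(\Omega)}$. Such a lifting is built time-pointwise by a harmonic extension of the trace followed by a Bogovski\u{\i} correction of the residual divergence, the necessary compatibility $\int_{\pOmega}\hh(t)\cdot\nn=0$ being enforced at $t=0$ by $\uu_{0}-\hh(0,\cdot)\in\LL^{2}_{0,\div}(\Omega)$ and at almost every later time by the intended identity $\div\uu=0$. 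Setting $\tilde\uu:=\uu-\hh^{*}$, the system reduces to: find $\tilde\uu\in\L^{2}(0,T;\WWbcdiv^{1,2}(\Omega))$ with $\tilde\uu(0)=\uu_{0}-\hh^{*}(0)\in\LL^{2}_{0,\div}(\Omega)$ satisfying
\[
\dual{\dt\tilde\uu}{\pphi}+\int_{\Omega}\bigl[\tenS'(\D{\vv})\D{\tilde\uu}:\D{\pphi}-(\tilde\uu\otimes\vv+\vv\otimes\tilde\uu):\nabla\pphi+\tenC\tilde\uu\cdot\pphi\bigr]=\dual{\tilde\FF}{\pphi}
\]
for every $\pphi\in\WWbcdiv^{1,2}(\Omega)$, where $\tilde\FF$ collects $\FF$, $-\dt\hh^{*}$, the Coriolis contribution of $\hh^{*}$ and the convective and viscous terms generated by $\hh^{*}$, and lies in $\L^{2}(0,T;\WWbcdiv^{1,2}(\Omega)^{*})$ with norm controlled by the data on the right-hand side of the claimed inequality.

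Next I fix a countable basis $\{\pphi_{k}\}$ of $\WWbcdiv^{1,2}(\Omega)$ (for instance, eigenfunctions of the Stokes operator) and solve for the Galerkin approximation $\tilde\uu_{N}(t)=\sum_{k=1}^{N}c_{k}^{N}(t)\pphi_{k}$ with initial datum the orthogonal $\LL^{2}$-projection of $\tilde\uu(0)$ on $\mathrm{span}\{\pphi_{1},\dots,\pphi_{N}\}$. Because $\vv\in\WW^{1,\infty}(Q)$, the associated linear Carath\'{e}odory ODE for $(c_{k}^{N})$ has globally integrable coefficients and hence a unique absolutely continuous solution on $[0,T]$. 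Testing the discrete equation by $\tilde\uu_{N}$ itself, using the lower bound in \eqref{asm:S1} (which, since $r\ge 2$, gives $\tenS'(\D{\vv})\D{\tilde\uu_{N}}:\D{\tilde\uu_{N}}\ge C_{1}|\D{\tilde\uu_{N}}|^{2}$), Korn's inequality and Young's inequality on the convective, Coriolis and source terms, I arrive at
\[
\tfrac12\tfrac{d}{dt}\norm{\tilde\uu_{N}(t)}_{2,\Omega}^{2}+c\,\norm{\nabla\tilde\uu_{N}(t)}_{2,\Omega}^{2}\le C\norm{\tilde\uu_{N}(t)}_{2,\Omega}^{2}+C\norm{\tilde\FF(t)}_{\WWbcdiv^{1,2}(\Omega)^{*}}^{2},
\]
and Gronwall yields the required bound uniformly in $N$. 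Weak compactness then extracts $\tilde\uu_{N}\weakly\tilde\uu$ in $\L^{2}(0,T;\WWbcdiv^{1,2}(\Omega))$ and $\ast$-weakly in $\L^{\infty}(0,T;\LL^{2}(\Omega))$; linearity of every term in the equation makes passage to the limit immediate, and uniqueness follows from the same energy estimate applied to the difference of two solutions. Reverting to $\uu=\tilde\uu+\hh^{*}$ and using the bound on $\hh^{*}$ recovers the stated inequality.

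The only non-routine step is the construction of the lifting $\hh^{*}$ with the delicate time regularity $\dt\hh^{*}\in\L^{2}(0,T;\WWbcdiv^{1,2}(\Omega)^{*})$; the linearity of the extension operator and the fact that it is tested only against divergence-free $\pphi$ are what make this possible under the weak hypotheses stated. Once this reduction is achieved, the remainder is classical linear parabolic theory adapted to the variable coefficient $\tenS'(\D{\vv})$, whose coercivity and boundedness are exactly what \eqref{asm:S1} provides.
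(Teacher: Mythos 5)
Your proposal follows essentially the same route as the paper's (very terse) proof: a standard Galerkin scheme in $\WWbcdiv^{1,2}(\Omega)$, coercivity of $\tenS'(\D\vv)$ from \eqref{asm:S1} together with Korn, Young and Gronwall for the uniform energy bound, weak compactness and linearity for the limit passage, and the same energy estimate applied to the difference of two solutions for uniqueness; you merely make explicit the divergence-free lifting of $\hh$ that the paper leaves implicit in the phrase ``standard Galerkin method.'' The one point to tighten is your remark that the compatibility $\int_{\pOmega}\hh(t)\cdot\nn=0$ is ``enforced by the intended identity $\div\uu=0$'': this is a necessary condition on the datum $\hh$ itself (automatically satisfied in the paper's applications, where $\hh=\ve 0$ or $\hh=-\partial\vv/\partial\nn\,(\ve T\cdot\nn)$ with $\div\vv=0$), so it must be assumed rather than inferred from the solution you are in the process of constructing.
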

\begin{proof}
Using the standard Galerkin method, one checks that the approximate solutions $\uu^N$ satisfy
$$ \norm{\uu^N}_{\L^\infty(0,T;\LL^2(\Omega))\cap\L^2(0,T;\WW^{1,2}(\Omega))} \le C, $$
where the constant on the r.h.s. depends on $\norm{\vv}_{1,\infty,Q}$ and the respective norms of $\uu_0$, $\hh$, $\FF$ and $|\tenC|$.
Hence there is a weak limit $\uu$ of $\{\uu^N\}$ which is a weak solution to \eqref{eq:gnslin}.
The uniqueness can be proved testing by the difference $\uu_1-\uu_2$ of two solutions, from which one gets for a.a. $t\in(0,T)$:
$$ \norm{\uu_1(t)-\uu_2(t)}_{2,\Omega}^2 + \norm{\D(\uu_1-\uu_2)}_{2,Q}^2 \le C\norm{\vv}_{\infty,Q}\norm{\uu_1-\uu_2}_{2,Q}\norm{\nabla(\uu_1-\uu_2)}_{2,Q}. $$
Gronwall's identity then directly implies $\uu_1=\uu_2$.
\end{proof}
A direct consequence of the lemma is the existence and uniqueness of the shape derivative $\vv'$.

Applying the same technique as in the previous lemma and additionally Theorem \ref{th:Lq_gstokes}, one obtains the result for the adjoint problem:
\begin{lemma}
\label{th:existence_adj}
Let $\Omega\in\C^{0,1}$, $\xxi\in\C^\infty(\overline B)$.
Then there is a unique weak solution $(\ww,s)$ to \eqref{eq:gnsadj}.
If in addition $\Omega\in\C^{2,1}$, then for any $\delta\in(0,T)$:
$$ \ww\in\L^2(0,T-\delta;\WW^{2,2}(\Omega)),\quad\dt\ww\in\LL^2(0,T-\delta;\LL^2(\Omega)),\quad s\in\L^2(0,T-\delta;\W^{1,2}(\Omega)). $$
In particular, $\lim_{\delta\searrow 0}\D\ww(T-\delta,\cdot)$ and $\lim_{\delta\searrow 0}s(T-\delta,\cdot)$ in $\L^2(\pOmega)$ exist and are finite.
\end{lemma}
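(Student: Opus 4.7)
The plan is to reverse time, lift the constant boundary data, and reduce \eqref{eq:gnsadj} to a forward-in-time generalized Stokes problem with homogeneous Dirichlet condition; existence and uniqueness then follow from a Galerkin scheme analogous to that of the previous lemma, while the higher regularity on $(0,T-\delta)$ will be obtained by a time cutoff combined with Theorem \ref{th:Lq_gstokes}. Setting $\tilde\ww(t,\xx) := \ww(T-t,\xx)$, $\tilde s(t,\xx) := s(T-t,\xx)$ and $\tilde\vv(t,\xx) := \vv(T-t,\xx)$, one obtains a forward parabolic system whose viscous coefficient $\tenS'(\D\tilde\vv)^\top$ is uniformly elliptic by \eqref{asm:S1} and continuous on $\overline Q$ by Corollary \ref{cor:reg_v}. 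Since $\dd$ is a constant vector (and thus automatically divergence free), the substitution $\hat\ww := \tilde\ww - \dd$ produces a problem with homogeneous Dirichlet condition on $\Sigma$, initial value $\hat\ww(0) = -\dd$ and bounded forcing $\tenC\dd$; the advective term $-2(\D\ww)\vv$ is lower order thanks to $\tilde\vv \in \L^\infty(Q)$, and the Coriolis contribution drops out on testing with the solution by skew-symmetry of $\tenC$.

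For existence and uniqueness of a weak solution in the class $\L^\infty(0,T;\LL^2(\Omega)) \cap \L^2(0,T;\WWbcdiv^{1,2}(\Omega))$, I would run a Faedo--Galerkin scheme exactly as in the preceding lemma: testing with the Galerkin iterate yields a coercive contribution $\int_Q C_1|\D\hat\ww^N|^2$ from the viscous term by \eqref{asm:S1}, the advective contribution is dominated via Young's inequality and absorbed, and Gronwall's inequality closes the estimate. Weak compactness then produces a weak solution, whose associated pressure $s$ is recovered by de Rham. Uniqueness follows from linearity, by testing the difference of two solutions and invoking Gronwall as before.

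For the higher regularity on $(0,T-\delta)$, fix $\delta \in (0,T)$ and choose a cutoff $\eta \in \C^\infty([0,T])$ that vanishes on a neighborhood of $t=0$ in reversed time and equals $1$ on $[\delta,T]$. The product $\eta\hat\ww$ satisfies a generalized Stokes problem with zero initial and boundary data and with right-hand side
\[
\eta\tenC\dd + 2\eta\,(\D\hat\ww)\tilde\vv - \eta\tenC\hat\ww + \dot\eta\,\hat\ww,
\]
which lies in $\LL^2(Q)$ by the previous step. All compatibility conditions are trivially satisfied and $\D\tilde\vv \in \C(\overline Q)$, so Theorem \ref{th:Lq_gstokes} with $q=2$ applies and yields $\eta\hat\ww \in \W^{1,2}(0,T;\LL^2(\Omega)) \cap \L^2(0,T;\WW^{2,2}(\Omega))$ together with $\eta\tilde s \in \L^2(0,T;\W^{1,2}(\Omega))$. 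Reversing time and restricting to $\{\eta \equiv 1\}$ then delivers the stated regularity of $\ww$, $\dt\ww$ and $s$ on $(0,T-\delta)$.

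The main obstacle will be the final claim on the existence of the trace limits in $\L^2(\pOmega)$, which is delicate precisely because the terminal condition $\ww(T)=0$ in $\Omega$ is incompatible with the boundary condition $\ww=\dd$ on $\pOmega$, and this incompatibility rules out any uniform $\WW^{2,2}$ bound as $\delta \searrow 0$. The parabolic interpolation $\W^{1,2}(0,T-\delta;\LL^2)\cap\L^2(0,T-\delta;\WW^{2,2}) \hookrightarrow \C([0,T-\delta];\WW^{1,2}(\Omega))$ yields $\ww(t,\cdot)|_\pOmega = \dd$ for every $t<T$, so the tangential part of $\D\ww|_\pOmega$ equals $\D\dd = \ve 0$ and only the normal part requires work. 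I would extract it from the momentum equation restricted to a tubular neighborhood of $\pOmega$---expressing the normal trace of $\D\ww$ through $\nabla s$, $\dt\ww$ and the bounded lower order terms---and then close by a Cauchy argument on differences $\ww(t_1,\cdot)-\ww(t_2,\cdot)$ with $t_1,t_2 \to T^-$, exploiting the linearity of \eqref{eq:gnsadj}. The same strategy applied to the Stokes momentum equation produces the trace limit for the pressure.
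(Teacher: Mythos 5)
Your treatment of existence, uniqueness and the interior-in-time regularity is sound and is essentially the paper's argument: the paper likewise reverses time to reduce \eqref{eq:gnsadj} to \eqref{eq:gnslin}, obtains the weak solution by the Galerkin scheme of the preceding lemma, and then invokes Theorem \ref{th:Lq_gstokes} away from $t=T$. The only difference is that you localize with a smooth cutoff $\eta$ (getting trivially compatible zero data and an $\LL^2$ right-hand side), whereas the paper restarts the problem on $(0,T-\delta)$ and uses that the terminal value $\ww(T-\delta,\cdot)$ equals $\dd$ on $\pOmega$ and is therefore compatible with the boundary condition. Both devices are legitimate and deliver the stated $\L^2(0,T-\delta;\WW^{2,2})$ and $\L^2(0,T-\delta;\W^{1,2})$ regularity.

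The genuine gap is in the last claim, the existence of $\lim_{\delta\searrow 0}\D\ww(T-\delta,\cdot)$ and $\lim_{\delta\searrow 0}s(T-\delta,\cdot)$ in $\L^2(\pOmega)$, which you rightly single out as the delicate point but for which your sketch does not work as written. First, the ``Cauchy argument on differences $\ww(t_1,\cdot)-\ww(t_2,\cdot)$ exploiting linearity'' is vacuous: these are values of one and the same solution at two times, not two solutions with different data, so linearity of \eqref{eq:gnsadj} provides no smallness. Second, the momentum equation cannot be ``solved'' for the normal trace of $\D\ww$: it is second order in $\ww$, and the quantities you propose to treat as known ($\nabla s$, $\dt\ww$) are exactly as singular near $t=T$ as $\div[\tenS'(\D\vv)^\top\D\ww]$ itself; your cutoff only controls them on $(0,T-\delta)$ with constants that degenerate as $\delta\searrow 0$. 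The paper's route is different and is precisely designed to avoid this degeneration: applying the maximal regularity estimate on $(0,T-\delta)$ with the compatible terminal data $\ww(T-\delta,\cdot)$ yields the $\delta$-uniform boundary-trace bound $\norm{\D\ww}_{2,(0,T-\delta)\times\pOmega}+\norm{s}_{2,(0,T-\delta)\times\pOmega}\le C|\dd|$, and the limits are then deduced from continuous dependence on the data. Your construction produces no bound uniform in $\delta$, so it can reach neither this estimate nor the limit statement --- and the uniform trace bound is also what is needed downstream for \eqref{eq:dJ_adjoint_lim}.
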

\begin{proof}
Using the change of variables $t\mapsto T-t$ we transform \eqref{eq:gnsadj} to \eqref{eq:gnslin} and obtain uniqueness of the adjoint state $\ww$.
It is however not possible to apply Theorem \ref{th:Lq_gstokes} since the terminal condition for $\ww$ is nonzero on $\partial S$.
Nevertheless, considering the adjoint problem on the time interval $(0,T-\delta)$, we find that the terminal condition $\ww(T-\delta,\cdot)$ is compatible and hence
$$ \norm{\D\ww}_{2,(0,T-\delta)\times\pOmega} + \norm{s}_{2,(0,T-\delta)\times\pOmega} \le C|\dd|. $$
Existence of the limits of $\D\ww$ and $s$ follows from the continuous dependence on the data.
\end{proof}

The rest of the paper is devoted to the proof of Theorems \ref{th:mat_der} and \ref{th:sh_der_J}.

\section{Formulation in the fixed domain}
\label{sec:fixed_form}

In this section we transform the problem $\problem{\OmegaE}$ to the fixed domain $\Omega$.
Let us introduce the following notation:
$$ \vEps(t,\xx):=\N^\top(\xx)\vEpsB(t,\yy(\xx)),\quad \vv_{0\eps}(\xx) := \N^\top(\xx)\vv_0(\yy(\xx)), ~(t,\xx)\in Q.$$
Note that the definition of $\vEps$ implies that $\div\vEps=0$.
The new function $\vEps\in\L^r(0,T;\WWbcdiv^{1,r}(\Omega))$ satisfies $\gdet\N^{-1}\N^{-\top}\dt\vEps\in\left(\L^r(0,T;\WWbcdiv^{1,r}(\Omega))\right)^*$, $\vEps(0,\cdot)=\vv_{0\eps}$ and the equality
\begin{multline}
\label{eq:vEps}
\int_0^T\dual{\gdet\N^{-1}\N^{-\top}\dt\vEps}{\pphi}_{\WWbcdiv^{1,r}(\Omega)} + \int_Q\Big[\gdet\tenS(\Deps\vEps):\Deps\pphi - \vEps\otimes\vEps:\nabla\pphi + \tenC\vEps\cdot\pphi\Big]\\ = \int_Q\ff\cdot\pphi + \int_0^T\dual{\AEps^1}{\pphi}_{\WWbcdiv^{1,2}(\Omega)} ~\mbox{ for all }\pphi\in\L^r(0,T;\WWbcdiv^{1,r}(\Omega)),
\end{multline}
where the term $\AEps^1$ on the right hand side is defined for $\pphi\in\L^2(0,T;\WWbcdiv^{1,2}(\Omega))$ by
\begin{multline}
\label{eq:def_AEps1}
\int_0^T\dual{\AEps^1}{\pphi}_{\WWbcdiv^{1,2}(\Omega)} = \int_Q\Big[\vEps\otimes\N^{-\top}\vEps:\nabla(\N^{-\top}\pphi) - \vEps\otimes\vEps:\nabla\pphi
\\
+ (\tenC-\gdet\N^{-1}\tenC\N^{-\top})\vEps\cdot\pphi + (\gdet\N^{-1}\ff\circ\yy-\ff)\cdot\pphi\Big].
\end{multline}
Here
$$ \Deps\vEps := \gdet^{-1}(\N\nabla(\N^{-\top}\vEps))_{sym}. $$
The left hand side of \eqref{eq:vEps} contains the perturbed term $\gdet\N^{-1}\N^{-\top}\dt\vEps$ due to the lack of a uniform estimate for $\dt\vEps$.
Similarly, the perturbed elliptic term $\gdet\tenS(\Deps\vEps):\Deps\pphi$ is present because of insufficient Lipschitz estimates, see Section~\ref{sec:existence_mat_der} for more details.

Applying change of coordinates we get:
\begin{multline}
\label{eq:Jeps_volume}
J(\OmegaE) = \int_\Omega \gdet\N^{-1}\N^{-\top}\left(\vEps(T)-\vv_{0\eps}\right)\cdot\xxi + \int_Q \Big[ \gdet\left(\N^{-1}\tenC\N^{-\top}\vEps - \N^{-1}\ff\circ\yy\right)\cdot\xxi\\
+ \left(\N^\top\tenS(\Deps\vEps) - \vEps\otimes(\N^{-\top}\vEps)\right):\nabla(\N^{-\top}\xxi) \Big].
\end{multline}

Now after all quantities and equations have been transformed to the fixed domain $\Omega$, we can analyze the limit $\eps\to 0$.

\section{Shape stability of weak solutions}
\label{sec:stability}

In this section we prove that $\vEps$ converges to $\vv$ in certain sense.
The result will be applied in the forthcoming sections.

\subsection{Uniform estimates}
Since the strong monotonicity of $\tenS$ and the Korn inequality hold uniformly for $\eps\to 0$, the energy inequality \eqref{eq:energy_ineq_eps} implies that $\norm{\vEpsB}_{\L^\infty(0,T;\LL^2(\OmegaE))}$, $\norm{\vEpsB}_{\L^r(0,T;\WWbcdiv^{1,r}(\OmegaE))}$ and $\norm{\vEpsB(T)}_{2,\OmegaE}$ is bounded uniformly with respect to $\eps$.
Using the expansions \eqref{eq:expansion_gN} we realize that
$$ \norm{\vEpsB(T)}_{2,\OmegaE} = \norm{\vEps(T)}_{2,\Omega} + o(1), $$
where $o(1)\to 0$ as $\eps\to 0$.
The same holds for the other norms, hence we derive the uniform estimates of $\vEps$:
\begin{equation}
\label{eq:bounds_vEps}
\{\vEps\}_{\eps>0} \mbox{ is bounded in }\L^\infty(0,T;\LL^2(\Omega)) \mbox{ and in } \L^r(0,T;\WWbcdiv^{1,r}(\Omega)),
\end{equation}
$$ \{\vEps(T)\}_{\eps>0} \mbox{ is bounded in }\LL^2(\Omega). $$
The Lebesgue-Sobolev interpolation inequality
\begin{equation}
\label{eq:lebesgue_sobolev_interpol}
\norm{\vEps}_{2r}^2 \le C\norm{\nabla\vEps}_r\norm{\vEps}_2
\end{equation}
together with \eqref{eq:bounds_vEps} yields that
\begin{equation}
\label{eq:Lq_bound_vEps}
\{\vEps\}_{\eps>0} \mbox{ is bounded in }\LL^{2r}(Q).
\end{equation}
Using this information and \eqref{eq:bound_S} we can estimate all but the first terms in \eqref{eq:vEps} and thus
$$ \{\gdet\N^{-1}\N^{-\top}\dt\vEps\}_{\eps>0} \mbox{ is bounded in }\left(\L^r(0,T;\WWbcdiv^{1,r}(\Omega))\right)^*. $$

\subsection{Convergence}
The uniform bounds, the interpolation inequality and the Aubin-Lions argument give rise to the following convergence:
\begin{align}
\notag
\vEps &\weakly \bar\vv &&\mbox{weakly-* in }\L^\infty(0,T;\LL^2(\Omega)),\\
\notag
                  &&&\mbox{weakly in }\L^r(0,T;\WWbcdiv^{1,r}(\Omega)),\\
\label{eq:weak_conv_vEps_Lq}
                  &&&\mbox{weakly in }\LL^{2r}(Q),\\
\label{eq:strong_conv_vEps}
\vEps &\to \bar\vv && \mbox{strongly in }\LL^z(Q),~1\le z<2r,\\
\notag
\vEps(T) &\weakly \bar\vv(T) && \mbox{weakly in }\LL^2(\Omega),\\
\notag
\gdet\N^{-1}\N^{-\top}\dt\vEps &\weakly \dt\bar\vv &&\mbox{weakly in }\left(\L^r(0,T;\WWbcdiv^{1,r}(\Omega))\right)^*,\\
\notag
\N^\top\tenS(\Deps\vEps) &\weakly \overline{\tenS(\D\vv)} &&\mbox{weakly in }\L^{r'}(Q,\R^{2\times 2}),\\
\notag
\AEps^1 &\weakly \ve 0 &&\mbox{weakly in }\left(\L^r(0,T;\WWbcdiv^{1,r}(\Omega))\right)^*,
\end{align}
where $\bar\vv$, $\overline{\tenS(\D\vv)}$ satisfy the integral identity
$$ \int_0^T\dual{\dt\bar\vv}{\pphi}_{\WWbcdiv^{1,r}(\Omega)} + \int_Q\left[\overline{\tenS(\D\vv)}:\D\pphi - \bar\vv\otimes\bar\vv:\nabla\pphi + \tenC\bar\vv\cdot\pphi \right] = \int_Q\ff\cdot\pphi $$
with the test functions $\pphi\in\L^r(0,T;\WWbcdiv^{1,r}(\Omega))$.
Clearly $\bar\vv$ satisfies the initial condition $\bar\vv(0,\cdot)=\vv_0$.
To identify the weak limit $\overline{\tenS(\D\vv)}$ we show that
\begin{equation}
\label{eq:strong_conv_Dveps}
\D{\vEps}\to\D\vv \mbox{ strongly in }\L^r(Q).
\end{equation}
From \eqref{eq:strong_monotony_S} we get:
\begin{align*}
C_5\norm{\D(\vEps-\bar\vv)}_{r,Q}^r
&\le \int_Q\left(\tenS(\D\vEps)-\tenS(\D\vv)\right):\D(\vEps-\bar\vv)\\
&= \int_Q\left(\tenS(\D\vEps)-\tenS(\Deps\vEps)\right):\D(\vEps-\bar\vv)\\
&+ \int_Q\tenS(\Deps\vEps):\left(\D(\vEps-\bar\vv)-(\N\nabla(\N^{-\top}(\vEps-\bar\vv)))_{sym}\right)\\
&+ \int_Q\tenS(\Deps\vEps):(\N\nabla(\N^{-\top}(\vEps-\bar\vv)))_{sym}\\
&- \int_Q\tenS(\D\bar\vv):\D(\vEps-\bar\vv)\\
&= I_1 + I_2 + I_3 + I_4.
\end{align*}
We immediately see that $I_4\to 0$ as $\eps\to 0$.
Further $\Deps\vEps=\D\vEps+\eps\ten A_\eps'$, where $\norm{\ten A_\eps'}_{r,Q}\le C$ uniformly w.r.t. $\eps\ge 0$, as follows from the uniform estimates for $\vEps$.
Consequently it holds:
\begin{multline*}
I_1 = \int_Q\int_0^1\Deriv{}{s}\tenS(\Deps\vEps+s(\D\vEps-\Deps\vEps))ds:\D(\vEps-\bar\vv)\\
= \int_Q\int_0^1\tenS'(\Deps\vEps+s(\D\vEps-\Deps\vEps))ds (\D\vEps-\Deps\vEps):\D(\vEps-\bar\vv)\\
\le \norm{\int_0^1\tenS'(\Deps\vEps+s(\D\vEps-\Deps\vEps))ds}_{\frac{r}{r-2},Q} \norm{\eps\ten A_\eps'}_{r,Q} \norm{\D(\vEps-\bar\vv)}_{r,Q}\\
\le C\eps,
\end{multline*}
where we have also used \eqref{asm:S1}.
In the same spirit we obtain:
$$ I_2 \le C\eps. $$
The last term $I_3$ can be expressed using \eqref{eq:vEps} as
\begin{equation*}
I_3 = -\int_0^T\dual{\gdet\N^{-1}\N^{-\top}\dt{\vEps}}{\vEps-\bar\vv}_{\WWbcdiv^{1,r}(\Omega)}
+o(1),
\end{equation*}
where $o(1)\to 0$ as $\eps\to 0$, as follows from the available convergence and energy estimates.
Integrating by parts with respect to time we get:
\begin{multline*}
I_3 = \norm{\sqrt\gdet\N^{-\top}(\vv_{0\eps}-\vv_0)}_2^2
-\norm{\sqrt\gdet\N^{-\top}(\vEps(T)-\vv(T))}_2^2\\
-\int_0^T\dual{\gdet\N^{-1}\N^{-\top}\dt\vv}{\vEps-\vv}_{\WWbcdiv^{1,r}(\Omega)} + o(1) = I_{31} + I_{32} + I_{33} + o(1),
\end{multline*}
where $I_{31}\to 0$, $I_{32}\le 0$ and $I_{33}$ vanishes by the weak convergence of $\vEps$ in $\L^r(0,T;\WWbc^{1,r}(\Omega))$.
We have proved \eqref{eq:strong_conv_Dveps}.

By means of the Vitali theorem we conclude that $\overline{\tenS(\D\vv)}=\tenS(\D\bar\vv)$ and consequently $\bar\vv=\vv$ is the weak solution to $\problem{\Omega}$.
As this solution is unique, the whole sequence $\{\vEps\}$ converges to $\vv$.

\section{Existence of material derivative}
\label{sec:existence_mat_der}

Our next task is to identify $\vDot$ as the limit of the sequence $\{\uEps\}$, where
$$ \uEps:=\frac{\vEps-\vv}\eps. $$
First we write down the system for the differences $\uEps$.
Subtracting \eqref{eq:vEps} and \eqref{eq:weak_form} we find that $\uEps\in\L^r(0,T;\WWbcdiv^{1,r}(\Omega))$ satisfies $\gdet\N^{-1}\N^{-\top}\dt\uEps\in\left(\L^r(0,T;\WWbcdiv^{1,r}(\Omega))\right)^*$, $\uEps(0,\cdot)=\frac{\vv_{0\eps}-\vv_0}\eps$ and the equality
\begin{multline}
\label{eq:uEps}
\int_0^T\dual{\gdet\N^{-1}\N^{-\top}\dt\uEps}{\pphi}_{\WWbcdiv^{1,r}(\Omega)} + \int_Q\Big[\frac1\eps\gdet(\tenS(\Deps\vEps)-\tenS(\Deps\vv)):\Deps\pphi\\
+ \tenC\uEps\cdot\pphi - \left(\vEps\otimes\uEps+\uEps\otimes\vv\right):\nabla\pphi \Big]
= \frac1\eps\int_0^T\dual{\AEps}{\pphi}_{\WWbcdiv^{1,2}(\Omega)}
\end{multline}
for all $\pphi\in\L^r(0,T;\WWbcdiv^{1,r}(\Omega))$.
The term $\AEps\in\left(\L^2(0,T;\WWbcdiv^{1,2}(\Omega))\right)^*$ on the right hand side is defined as follows:
$$ \AEps := \AEps^1 + \AEps^2 + \AEps^3, $$
$$ \AEps^1 \mbox{ is given by \eqref{eq:def_AEps1}}, $$
$$ \int_0^T\dual{\AEps^2}{\pphi}_{\WWbcdiv^{1,2}(\Omega)} := \int_Q(\I-\gdet\N^{-1}\N^{-\top})\dt\vv\cdot\pphi, $$
\begin{equation*}
\int_0^T\dual{\AEps^3}{\pphi}_{\WWbcdiv^{1,2}(\Omega)} := \int_Q\Big[\N^\top\tenS(\Deps\vv):\nabla(\N^{-\top}\pphi)-\tenS(\D\vv):\D{\pphi}\Big].
\end{equation*}

Next we want to derive uniform bounds for $\{\uEps\}$.
Therefore we refine the estimates of the previous section.
In contrast to Section \ref{sec:stability}, these estimates do not follow the structure of the problem, namely the elliptic term $\frac1\eps\int_Q\gdet(\tenS(\Deps\vEps)-\tenS(\Deps\vv)):\Deps(\cdot)$ is bounded in $(\L^{\frac{2r}{4-r}}(0,T;\WWbcdiv^{1,\frac{2r}{4-r}}(\Omega)))^*$ and the right hand side $\frac1\eps\AEps$ in $(\L^2(0,T;\WWbcdiv^{1,2}(\Omega)))^*$ only.

\subsection{Lipschitz estimates for \texorpdfstring{$\AEps$}{Aeps}}
\label{sec:lip_est_Aeps}
In this subsection we will investigate the Lipschitz continuity of the map $\eps\mapsto\AEps$.
In particular, we are going to show that
\begin{equation}
\label{eq:conv_fracAepsEps}
\frac\AEps\eps \weakly \A_0' \mbox{ weakly in }\left(\L^2(0,T;\WWbcdiv^{1,2}(\Omega))\right)^*,
\end{equation}
where $\A_0'$ is defined in \eqref{eq:def_Ap0}.

Indeed, due to \eqref{eq:expansion_gN} and Corollary \ref{cor:reg_v} it holds:
$$ \frac1\eps\AEps^2 \to (\O+\O^\top-\I\tr\O)\dt\vv \mbox{ strongly in }\LL^2(Q). $$
In order to pass to the limit in the terms $\frac1\eps\AEps^1$ and $\frac1\eps\AEps^3$ we rewrite them in a convenient form:
\begin{multline*}
\frac1\eps\int_0^T\dual{\AEps^1}{\pphi}_{\WWbcdiv^{1,2}(\Omega)} =
\int_Q\vEps\otimes\vEps:\nabla\left(\frac{\N^{-\top}-\I}\eps\pphi\right)
+\int_Q\vEps\otimes\frac{\N^{-\top}-\I}\eps\vEps:\nabla\left(\N^{-\top}\pphi\right)\\
+\int_Q\frac{\I-\gdet\N^{-1}}\eps\tenC\vEps\cdot\pphi
+\int_Q\gdet\N^{-1}\tenC\frac{\I-\N^{-\top}}\eps\vEps\cdot\pphi\\
+\int_Q\frac{\gdet\N^{-1}-\I}\eps(\ff\circ\yy)\cdot\pphi
+\int_Q\frac{\ff\circ\yy-\ff}\eps\cdot\pphi
=\sum_{j=1}^6 I_j,
\end{multline*}
\begin{multline*}
\frac1\eps\int_Q\dual{\AEps^3}{\pphi}_{\WWbcdiv^{1,2}(\Omega)} = \int_Q\tenS(\D\vv):\nabla\left(\frac{\N^{-\top}-\I}\eps\pphi\right)
+\int_Q\frac{\N^\top-\I}\eps\tenS(\D\vv):\nabla\left(\N^{-\top}\pphi\right)\\
+\int_Q\N^\top\frac{\tenS(\Deps\vv)-\tenS(\D\vv)}\eps:\nabla\left(\N^{-\top}\pphi\right) = \sum_{j=7}^9 I_j.
\end{multline*}
Employing \eqref{eq:weak_conv_vEps_Lq} we obtain for $\eps\to 0$:
\begin{align*}
I_1 &\to \int_Q\O\div(\vv\otimes\vv)\cdot\pphi,\\
I_2 &\to \int_Q\div\left(\vv\otimes\O^\top\vv\right)\cdot\pphi.
\end{align*}
Similarly, from \eqref{eq:strong_conv_vEps} it follows that
\begin{align*}
I_3 &\to \int_Q(\O-\I\tr\O)\tenC\vv\cdot\pphi,\\
I_4 &\to \int_Q\tenC\O^\top\vv\cdot\pphi,\\
I_5 &\to \int_Q(\I\tr\O-\O)\ff\cdot\pphi.
\end{align*}
Analogously,
\begin{align*}
I_7 &\to -\int_Q\tenS(\D\vv):\nabla\left(\O^\top\pphi\right),\\
I_8 &\to \int_Q\O^\top\tenS(\D\vv):\D\pphi.
\end{align*}
Rewriting $I_6$ carefully yields:
\begin{multline*}
I_6 = \frac1\eps\int_Q\left[(\ff\cdot\pphi)\circ\yy + \ff\circ\yy\cdot(\pphi-\pphi\circ\yy) - \ff\cdot\pphi\right]\\
= \int_{Q\cap Q_\eps}\frac{\gdet^{-1}-1}\eps\ff\cdot\pphi - \frac1\eps\int_{Q\setminus Q_\eps}\ff\cdot\pphi + \int_Q\ff\circ\yy\cdot\frac{\pphi-\pphi\circ\yy}\eps\\
\to -\int_Q\left[(\ff\cdot\pphi)\div\ve T + \nabla\pphi:\ff\otimes\ve T\right] =: \int_0^T\dual{(\nabla\ff)\ve T}{\pphi}_{\WWbc^{1,2}(\Omega)},
\end{multline*}
where we have used the Hardy inequality and the following argument:
$$ \frac1\eps\left|\int_{Q\setminus Q_\eps}\ff\cdot\pphi\right| \le C\int_{Q\setminus Q_\eps}\frac{|\ff\cdot\pphi|}{\dist_\pOmega} \le C\norm{\ff}_{2,Q\setminus Q_\eps}\norm{\nabla\pphi}_{2,Q} \to 0. $$

Finally, taking into account the continuity of $\tenS'$ and $\D\vv$ (as follows from Corollary \ref{cor:reg_v}), we have:
\begin{multline*}
I_9 = \N^{-1}\div\left(\N^\top\int_0^1\tenS'(\Deps\vv+s(\D\vv-\Deps\vv))ds\frac{\D\vv-\Deps\vv}\eps\right)\\
\to \div\left(\tenS'(\D\vv)\left(\tr\O\D\vv-(\O\nabla\vv)_{sym}+\D(\O^\top\vv)\right)\right)
\end{multline*}
strongly in $\left(\L^2(0,T;\WWbcdiv^{1,2}(\Omega))\right)^*$.
Summing all terms up we arrive at \eqref{eq:conv_fracAepsEps}.

\subsection{Estimates of \texorpdfstring{$\uEps$}{ueps}}
Testing \eqref{eq:uEps} by $1_{[0,t)}\uEps$ and integrating by parts the time derivative and the convective terms we obtain:
\begin{multline*}
\frac12\norm{\sqrt\gdet\N^{-\top}\uEps(t)}_2^2 + \int_0^t\int_\Omega\uEps\otimes\uEps:\nabla\vEps + \frac1\eps\int_0^t\int_\Omega\gdet(\tenS(\Deps\vEps)-\tenS(\Deps\vv)):\Deps\uEps\\
= \frac12\norm{\sqrt\gdet\N^{-\top}\uEps(0)}_2^2 + \frac1\eps\int_0^t\dual{\AEps}{\uEps}_{\WWbcdiv^{1,2}(\Omega)}.
\end{multline*}
Next we apply \eqref{eq:strong_monotony_S}, \eqref{eq:expansion_gN}, \eqref{eq:lebesgue_sobolev_interpol}, H\"older's and Young's inequality to show that
\begin{multline*}
\norm{\uEps(t)}_2^2 + \int_0^t\norm{\D\uEps}_2^2
\le C\Big(\int_0^T\norm{\nabla\vEps}_2\norm{\nabla\uEps}_2\norm{\uEps}_2^2 + \norm{\vv_0}_2^2\\
+ \frac1\eps\int_0^T\norm{\AEps}_{\left(\L^2(0,T;\WWbcdiv^{1,2}(\Omega))\right)^*}^2\Big)
\end{multline*}
for a.a. $t\in(0,T)$ and for $t=T$.
Gronwall's and Korn's inequality and \eqref{eq:conv_fracAepsEps} then directly implies that
$$ \{\uEps\}_{\eps>0} \mbox{ is bounded in }\L^\infty(0,T;\LL^2(\Omega)) \mbox{ and in }\L^2(0,T;\WW^{1,2}(\Omega)), $$
$$ \{\uEps(T)\}_{\eps>0} \mbox{ is bounded in }\LL^2(\Omega). $$
Since $\D\vEps$ is bounded only in $\L^r(Q)$, we have for $\pphi\in\L^{\frac{2r}{4-r}}(0,T;\WW^{1,\frac{2r}{4-r}}(\Omega))$:
\begin{multline*}
\frac1\eps\int_{Q}\gdet(\tenS(\Deps\vEps)-\tenS(\Deps\vv)):\Deps\pphi = \int_{Q}\gdet\int_0^1\tenS'(\Deps\vv+s\Deps(\vEps-\vv))ds\Deps\uEps:\Deps\pphi\\
\le C(\norm{\D\vEps}_r+\norm{\D\vv}_r)^{r-2}\norm{\D\uEps}_2\norm{\D\pphi}_{\frac{2r}{4-r}}
\end{multline*}
and thus
\begin{equation}
\label{eq:bound_dif_SDv}
\left\{\frac{\tenS(\Deps\vEps)-\tenS(\Deps\vv)}\eps\right\}_{\eps>0} \mbox{ is bounded in }\L^{\frac{2r}{3r-4}}(Q).
\end{equation}
Consequently, due to \eqref{eq:uEps},
\begin{equation}
\label{eq:bound_dtuEps}
\left\{\gdet\N^{-1}\N^{-\top}\dt\uEps\right\}_{\eps>0} \mbox{ is bounded in }\left(\L^{\frac{2r}{4-r}}(0,T;\WWbcdiv^{1,\frac{2r}{4-r}}(\Omega))\right)^*.
\end{equation}
%

\subsection{Convergence of \texorpdfstring{$\uEps$}{ueps}}
We are ready to prove the existence of material derivatives.
The estimates derived in the previous section imply that
\begin{subequations}
\label{eq:conv_uEps}
\begin{align}
\uEps &\weakly \bar\uu &&\mbox{ weakly-* in }\L^\infty(0,T;\LL^2(\Omega)),\\
      &              &&\mbox{ weakly in }\L^2(0,T;\WWbcdiv^{1,2}(\Omega)),\\
      &              &&\mbox{ weakly in }\LL^4(Q),\\
\uEps(T)&\weakly \bar\uu(T) &&\mbox{ weakly in }\LL^2(\Omega),\\
\uEps &\to     \bar\uu &&\mbox{ strongly in }\LL^{z}(Q),~ 1\le z<4.
\end{align}
Since $\tenS\in\C^2$, the Nemytski\u\i\ mapping associated to $\tenS'$ is continuous from $\L^r$ to $\L^{\frac{r}{r-2}}$, hence from \eqref{eq:strong_conv_Dveps} we deduce that
\begin{equation}
\frac1\eps\int_0^T\scal{\gdet(\tenS(\Deps\vEps)-\tenS(\Deps\vv))}{\Deps\pphi} \to \int_0^T\scal{\tenS'(\D\vv)\D\bar\uu}{\D\pphi}
\end{equation}
for all $\pphi\in\L^{\frac{2r}{4-r}}(0,T;\WW^{1,\frac{2r}{4-r}}(\Omega))$.
Consequently, \eqref{eq:bound_dtuEps} yields:
\begin{equation}
\gdet\N^{-1}\N^{-\top}\dt\uEps \weakly \dt\bar\uu
\end{equation}
\end{subequations}
weakly in $\left(\L^{\frac{2r}{4-r}}(0,T;\WWbcdiv^{1,\frac{2r}{4-r}}(\Omega))\right)^*$.
The initial conditions satisfy:
$$ \uEps(0) \weakly \O^\top\vv_0 + (\nabla\vv_0)\ve T=\bar\uu(0) \mbox{ weakly in }\LL^2(\Omega), $$
hence $\bar\uu=\vDot$ is the unique weak solution to \eqref{eq:vDot}.
Theorem \ref{th:mat_der} has been proved.

\section{Shape gradient of $J$}
\label{sec:shape-der}

Let us decompose the fraction
$$ \frac{J(\Omega_\eps)-J(\Omega)}\eps $$
into the sum $J_1^\eps+J_2^\eps$ where, in view of \eqref{eq:J_volume} and \eqref{eq:Jeps_volume},
\begin{multline*}
J_1^\eps:=\int_\Omega\gdet\N^{-1}\N^{-\top}\left(\uEps(T)-\uEps(0)\right)\cdot\xxi + \int_Q\bigg[\gdet\N^{-1}\tenC\N^{-\top}\uEps\cdot\xxi\\
+\left(\N^\top\frac{\tenS(\Deps\vEps)-\tenS(\D\vv)}\eps
-\uEps\otimes\vEps
-\vv\otimes\uEps\right):\nabla(\N^{-\top}\xxi)\bigg]
\end{multline*}
and
\begin{multline*}
J_2^\eps:=\int_\Omega \frac{\gdet\N^{-1}\N^{-\top}-\I}\eps\left(\vv(T)-\vv_0\right)\cdot\xxi + \int_Q\bigg[\Big(\gdet\N^{-1}\tenC\frac{\N^{-\top}-\I}\eps\vv
+\frac{\gdet\N^{-1}-\I}\eps\tenC\vv\\
-\frac{\gdet\N^{-1}-\I}\eps\bar\ff_\eps\circ\yy
-\frac{\ff\circ\yy-\ff}\eps\Big)\cdot\xxi\\
+\left(\frac{\N^\top-\I}\eps\tenS(\D\vv)
-\vEps\otimes\frac{\N^{-\top}-\I}\eps\vEps\right):\nabla(\N^{-\top}\xxi)\\
+\left(\tenS(\D\vv)-\vv\otimes\vv\right):\nabla\left(\frac{\N^{-\top}-\I}\eps\xxi\right)\bigg].
\end{multline*}
From \eqref{eq:bound_dif_SDv} and \eqref{eq:conv_uEps} it follows that
$$ J_1^\eps\to J_\vv(\vDot). $$
Using similar arguments as in Section \ref{sec:lip_est_Aeps} we can show that
$$ J_2^\eps\to J_e(\ve T). $$
The continuity of the map $\ve T\mapsto dJ(\Omega;\ve T)$ follows from the estimate \eqref{eq:est_vDot}.
Theorem \ref{th:sh_der_J} is proved.

\section{Conclusion}

The paper was devoted to the proof of shape differentiability for the problem of time-dependent planar flow of an incompressible fluid with shear rate dependent viscosity.
We have considered the so-called subcritical case when the weak solution is known to be unique and have bounded gradient.
These strong properties were crucial in the proof of the existence of the material derivative, in particular for the well-posedness of the linearized system satisfied by $\vDot$.
For this reason it seems impossible to extend the results to 3 spatial dimensions and to $r<2$, because no available theory can guarantee bounded gradient of the solution in such cases.


The considered model itself has a limited practical applicability, mainly due to the homogeneous boundary condition.
We believe that a generalization to other types of boundary conditions and shape functionals is possible to some extent, however to keep ideas clear the slightly artificial setting was chosen.
Extension to the problems with non-trivial boundary conditions as well as application to numerical methods will be considered in the further research.

\section*{Acknowledgement}

The work of J. Stebel was supported by the Czech Science Foundation (GA\v CR) grant No. 201/09/0917 and by the ESF grant Optimization with PDE Constraints.

\bibliographystyle{model1b-num-names}
\bibliography{ref}

\end{document}